\newtheorem{theorem}{Theorem}
\newtheorem{lemma}[theorem]{Lemma}
\newtheorem{corollary}[theorem]{Corollary}
\newtheorem{proposition}[theorem]{Proposition}
\theoremstyle{remark}
\newtheorem{definition}[theorem]{Definition}
\newtheorem{remark}[theorem]{Remark}
\numberwithin{theorem}{section} \numberwithin{equation}{section}
\newcommand{\ZZ}{\mathbb{Z}}
\newcommand{\al}{\alpha}
\newcommand{\be}{\beta}
\newcommand{\ga}{\gamma}
\newcommand{\mfP}{\mathfrak{P}}
\newcommand{\mfa}{\mathfrak{a}}
\newcommand{\ov}[1]{\overline{#1}}
\newcommand{\isom}{\simeq}
\newcommand{\bmu}{\mbox{\boldmath$\mu$}}
\newcommand{\mfp}{\mathfrak{p}}
\newcommand{\mfb}{\mathfrak{b}}
\newcommand{\mff}{\mathfrak{f}}
\newcommand{\mfc}{\mathfrak{c}}
\newcommand{\ls}{\ell^*}
\newcommand{\F}{\mathbb{F}}
\newcommand{\f}{\mathfrak{f}}
\newcommand{\z}{\zeta}
\newcommand{\Cl}{{\text {\rm Cl}}}
\newcommand{\Q}{\mathbb{Q}}
\newcommand{\Z}{\mathbb{Z}}
\newcommand{\SL}{{\text {\rm SL}}}
\newcommand{\p}{\mathfrak p}
\renewcommand{\a}{\mathfrak a}
\newcommand{\q}{\mathfrak q}
\newcommand{\ze}{\zeta_{\ell}}
\renewcommand{\c}{\mathfrak c}
\newcommand{\N}{{\mathcal N}}
\newcommand{\Gal}{{\text {\rm Gal}}}
\newcommand{\Sym}{{\text {\rm Sym}}}
\newcommand{\Disc}{\textnormal{Disc}}
\renewcommand{\b}{{\mathfrak b}}
\newcommand*{\longhookrightarrow}{\ensuremath{\lhook\joinrel\relbar\joinrel\rightarrow}}
\let\@@pmod\pmod
\DeclareRobustCommand{\pmod}{\@ifstar\@pmods\@@pmod}
\def\@pmods#1{\mkern4mu({\operator@font mod}\mkern 6mu#1)}
\begin{document}
\title[Identities for field extensions]
{Identities for field extensions generalizing the Ohno--Nakagawa relations}
\date{\today}
\author{Henri Cohen}
\address{Universit\'e de Bordeaux, Institut de Math\'ematiques, U.M.R. 5251 du C.N.R.S,
351 Cours de la Lib\'eration,
33405 TALENCE Cedex, FRANCE}
\email{Henri.Cohen@math.u-bordeaux1.fr}

\author{Simon Rubinstein-Salzedo}
\address{Department of Statistics, Stanford University, 390 Serra Mall, Stanford, CA 94305, USA}
\email{simonr@stanford.edu}

\author{Frank Thorne}
\address{Department of Mathematics, University of South Carolina, 1523 Greene Street, Columbia, SC 29208, USA}
\email{thorne@math.sc.edu}

\keywords{Ohno-Nakagawa relations, Scholz reflection, field extensions}
\subjclass[2010]{11R21, 11R29}

\begin{abstract}
In previous work, Ohno \cite{O} conjectured, and Nakagawa \cite{N} proved, relations between the counting functions of certain cubic fields. These relations
may be viewed as complements to the Scholz reflection principle, 
and Ohno and Nakagawa deduced them as consequences of `extra functional equations' involving the Shintani zeta
functions associated to the prehomogeneous vector space of binary cubic forms.

In the present paper we generalize their result by proving a similar identity
relating certain degree $\ell$ fields with Galois groups $D_\ell$ and $F_\ell$
respectively, for any odd prime $\ell$, and in particular we give another proof of the Ohno--Nakagawa 
relation without appealing to binary cubic forms.
\end{abstract}

\maketitle

\section{Introduction}
Let $N_3(D)$ denote the number of cubic fields of discriminant $D$. The starting point of this paper is the following theorem
of Nakagawa \cite{N}, which had been previously conjectured by Ohno \cite{O}.
\begin{theorem}\cite{N, O}\label{thm_NO}
Let $D \neq 1, -3$ be a fundamental discriminant.
We have
\begin{equation}
N_3(D^*)+N_3(-27D)=\begin{cases}
N_3(D) & \text{ if $D<0$\;,}\\
3N_3(D)+1 & \text{ if $D>0$\;,}\end{cases}
\end{equation}
where
$D^* = -3D$ if $3 \nmid D$ and $D^* = -D/3$ if $3 \mid D$.
\end{theorem}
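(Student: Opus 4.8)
The plan is to translate both sides of the identity into counts of cyclic cubic extensions of quadratic fields via class field theory, and then to match the two counts by a reflection (Spiegelung) duality. First I would record the standard dictionary. A cubic field $K$ whose Galois closure is an $S_3$-extension has a quadratic resolvent $F = \Q(\sqrt{\disc K})$, and its Galois closure is a cyclic cubic extension $L/F$ whose conductor $\mathfrak{f}$ satisfies $\disc K = \disc(F)\cdot N(\mathfrak{f})$. By class field theory such $L/F$ correspond to surjections $\chi\colon \Cl_{\mathfrak{f}}(F)\to \Z/3\Z$ taken up to $\Aut(\Z/3\Z)$, and the condition that $L/\Q$ be non-abelian (genuinely $S_3$, not $C_6$) is exactly that $\chi$ lie in the minus eigenspace for $\Gal(F/\Q)$, i.e.\ $\chi^\sigma = \chi^{-1}$. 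Since $D\neq 1$ is a fundamental discriminant it is not a square, so every cubic field of discriminant $D$ is non-cyclic; hence $N_3(D)$ counts precisely the unramified minus characters for $F=\Q(\sqrt D)$, and the classical Hasse count gives $N_3(D)=(3^a-1)/2$ with $a$ the rank of the relevant minus part of $\Cl(F)$.

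Next I would observe that the three discriminants collapse onto two fields: $\Q(\sqrt{D^*}) = \Q(\sqrt{-27D}) = \Q(\sqrt{-3D})=:F'$, the Scholz reflection partner of $F$. Under the dictionary, $N_3(D^*)$ counts the minus characters of $\Cl(F')$ of conductor $1$, while $N_3(-27D)$ counts the additional minus characters whose conductor is supported only at the prime above $3$; together $N_3(D^*)+N_3(-27D)$ enumerates \emph{all} minus cubic characters of $F'$ whose conductor is a power of $3$. Thus the identity becomes a comparison between the minus-part $3$-torsion attached to $F'$, with a prescribed local condition at $3$, and the minus-part $3$-torsion attached to $F$ in the unramified case. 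I would make this precise by passing to $k=\Q(\zeta_3)$ and rewriting each count as the order of a Selmer-type subgroup of $k^\times/(k^\times)^3$ cut out by local conditions at the primes above $3$ and at the archimedean places, the minus-eigenspace condition being recorded by the action of $\Gal(k(\sqrt D)/\Q)$.

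The core of the argument is then a reflection duality: the Kummer pairing (Hilbert symbol) on $k^\times/(k^\times)^3$ is perfect, and the local conditions defining the $F$-side and the $F'$-side are exact annihilators of one another away from $3$ and $\infty$. This upgrades the classical Scholz inequality relating $\rk_3\Cl(F)$ and $\rk_3\Cl(F')$ (which differ by at most $1$) into an exact equality of group orders once the local discrepancies at $3$ and $\infty$ are inserted, and the extra conductor-$3$ characters counted by $N_3(-27D)$ are exactly what fills the gap in the unramified duality. The sign dichotomy then appears cleanly at the level of ranks: writing $N_3(D)=(3^a-1)/2$, the two right-hand sides are $(3^a-1)/2$ when $D<0$ and $3N_3(D)+1=(3^{a+1}-1)/2$ when $D>0$, so the asymmetry is precisely that the reflected (imaginary) field has $3$-rank one larger. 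This extra unit of rank is the Scholz jump, governed by the unit group of the real field $F=\Q(\sqrt D)$, namely by whether its fundamental unit is locally a cube at $3$; for $D<0$, where $F$ has no fundamental unit, the jump is absent.

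The hard part will be this last step: turning the reflection from a rank inequality into an identity of counting functions, with no slack, requires pinning down the local conditions at $3$ and at $\infty$ exactly, correctly matching the conductor-$3$ characters of $F'$ against the archimedean defect, and showing that the unit/archimedean correction for real $D$ produces exactly $3N_3(D)+1$ rather than $N_3(D)$. Carrying out the analogous bookkeeping uniformly for all odd primes $\ell$, with $\Q(\zeta_3)$ replaced by $\Q(\zeta_\ell)$ and quadratic resolvents replaced by $D_\ell$- and $F_\ell$-type resolvents, is what yields the general theorem, of which this is the case $\ell=3$.
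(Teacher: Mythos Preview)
Your outline is the paper's approach: both sides are counted via class field theory over $F=\Q(\sqrt{D})$ and its mirror $F'=\Q(\sqrt{-3D})$, then compared through a Kummer pairing in the compositum $K_z=F(\zeta_3)=F'(\zeta_3)$, with Galois eigenspaces descending the ray-class side to $F'$ and the Selmer side to $F$. One slip: the Selmer-type group must live in $K_z^\times/(K_z^\times)^3$, not in $k^\times/(k^\times)^3$ with $k=\Q(\zeta_3)$; the paper makes the pairing perfect by taking the ray class group modulo $\mfb=(1-\zeta_3)^3\Z_{K_z}$ and invoking Hecke's theorem on conductors of Kummer extensions (Corollary~\ref{cor:pairing}), which is precisely the ``hard part'' you anticipate at $3$.

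One conceptual point to correct: the extra unit of rank for $D>0$ is \emph{not} governed by whether the fundamental unit of $F$ is locally a cube at~$3$. That criterion decides the Scholz dichotomy between the \emph{unramified} $3$-ranks of $F$ and $F'$, which can go either way. In the Ohno--Nakagawa count the extra factor of $3$ appears unconditionally for $D>0$: it comes simply from $|U(F)/U(F)^3|=3$ in the Selmer exact sequence, so that $|S_3(F)|=3\,|\Cl(F)/\Cl(F)^3|$ (Proposition~\ref{prop:size_gb}). Allowing conductor supported at $3$ on the mirror side enlarges the ray class group by exactly this factor regardless of the unit's local behaviour, and that is why adding $N_3(-27D)$ turns the Scholz inequality into an identity. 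If you carry out your plan with this in mind, no case split on the fundamental unit is needed.
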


Their result is closely related to that which can be derived from 
the classical reflection principle of Scholz \cite{scholz}, which omits the terms
$N_3(-27D)$ and provides for two possibilities for each term on the right.
The significance of $D^*$ is that $\Q(\sqrt{D^*})$ is the {\itshape mirror field} of $\Q(\sqrt{D})$,
the quadratic subfield of $\Q(\sqrt{D}, \z_3)$ distinct from $\Q(\sqrt{D})$
and $\Q(\z_3)$.

Nakagawa deduced his result from a careful study of the arithmetic of binary
cubic forms, which yielded an `extra functional equation' for the associated 
Shintani zeta functions. It appears that such `extra functional equations' 
might be a common feature in the theory of prehomogeneous vector spaces; 
for example, in unpublished work Nakagawa and Ohno \cite{NO} have
conjectured a related formula for the prehomogeneous vector space 
$(\Sym^2 \Z^3 \otimes \Z^2)^*$, which as Bhargava demonstrated in 
\cite{B4, B4count}, may be used to count quartic fields. Nakagawa has made 
substantial headway toward proving this formula, but it appears that there are
still many technical details to be overcome.

In this paper we demonstrate that the Ohno--Nakagawa results can be 
generalized in a different direction, in which cubic fields are replaced
by certain degree $\ell$ fields for any odd prime $\ell$, using a
framework involving class field theory and Kummer theory, and which also
gives another proof of Theorem \ref{thm_NO}.

\medskip
For an odd prime $\ell$, we say that a degree $\ell$ number field is a {\itshape $D_{\ell}$-field} if its Galois closure is dihedral of order $2 \ell$,
and an {\itshape $F_{\ell}$-field} if its Galois closure has Galois group 
$F_{\ell}$, defined by
\begin{equation}\label{def:fl}
F_{\ell} := \langle \sigma, \tau : \sigma^{\ell} = \tau^{\ell - 1} = 1, \tau \sigma \tau^{-1} = \sigma^g \rangle\;,
\end{equation}
for a primitive root $g \pmod \ell$. (Note that different primitive
roots give isomorphic groups, but for our purposes it will be 
important to specify which primitive root is taken.)
For $\ell = 3$ we have $D_3 = F_3 = S_3$, so this distinction is not apparent.

We observe the convention that discriminants always specify the number of pairs of complex embeddings. These will
be indicated by powers of $D$ and $-1$ (e.g., if $D$ is negative, $D^k$ indicates $k$ pairs of complex embeddings and $(-D)^k$ indicates none).
Thus $(-1)^{r_2}|D|^k$
will mean that specific discriminant, with $r_2$ pairs of complex embeddings
(so this is different from, say, $(-1)^{r_2+2}|D|^k$).
Subject to this convention, we write $N_{D_\ell}(D)$ and $N_{F_\ell}(D)$ for the number of $D_\ell$- and $F_\ell$-fields
of discriminant $D$.
Our main theorems, as in Theorem \ref{thm_NO}, 
will relate $N_{D_\ell}(D)$ and $N_{F_\ell}(D')$ for related values of $D, D'$. 

For $\ell > 5$, our methods will not relate {\itshape all} $F_{\ell}$-fields of discriminant $D'$ to $D_{\ell}$-fields; we require an additional
Galois theoretic condition on our $F_\ell$-fields which we now describe. The Galois closure $E'$ of each $F_\ell$-field $E$ that we count
will be a
degree $\ell$ extension of a degree $\ell - 1$ field $K'$, cyclic over $\Q$. 
(See Theorem \ref{thm:char_mf}.) 
In turn, each $K'$ will be a subfield
of the degree $2 (\ell - 1)$ extension $K_z := \Q(\sqrt{D}, \zeta_{\ell})$ (we assume that $D \neq (-1)^{\frac{\ell - 1}{2}} \ell$);
we will call $K'$ the {\itshape mirror field} of
of $K = \Q(\sqrt{D})$.

Choose and fix a primitive root $g \pmod \ell$, and define $\tau$ to be the unique element of $\Gal(\Q(\ze)/\Q)$ with
$\tau(\zeta_\ell) = \zeta_\ell^g$. We write also $\tau$ for the unique lift of this element to $\Gal(K_z/K)$, and for its unique
restriction to an element of $\Gal(K'/\Q)$. (We will have $K \cap K' = \Q$.)

The group $\Gal(K'/\Q)$ acts on $\Gal(E'/K')$ by conjugation, and we require this action to match \eqref{def:fl}
{\itshape for the choices of $\tau$ and $g$ already made}. More precisely, suppose $E'$ is such an extension of $K'$, let $\tau$ denote any lift of the $\tau\in\Gal(K'/\Q)$ from the last paragraph to $\Gal(E'/\Q)$, and let $\sigma\in\Gal(E'/K')\le\Gal(E'/\Q)$ be any element of order $\ell$. Then we require that $\tau\sigma\tau^{-1}=\sigma^g$. (This is independent of the choice of lift of $\tau$ and of $\sigma$.) We write $N^*_{F_\ell}(D)$ for the number of $F_\ell$-fields of discriminant $D$
satisfying this condition.

We will show in Lemma~\ref{discsuffices} that any $F_\ell$ field with the discriminants we count has a mirror field as its $C_{\ell-1}$ subfield. With notation as above 
we must have
$\tau\sigma\tau^{-1}=\sigma^{g'}$ for some primitive root $g'$ modulo $\ell$, so our condition may be stated as requiring that
$g' = g$. Moreover, there are many $F_\ell$ fields whose
discriminants we do not count --- for example, fields of
the form 
$\Q(\sqrt[\ell]{a})$ for $a\in\Q^\times\setminus\Q^{\times\ell}$ and $\ell \geq 5$; the $C_{\ell-1}$ subfield of all these fields is $\Q(\ze)$. Our work raises a variety of questions regarding
the relative frequencies of the fields being counted; we expect that
these questions may be quite difficult to answer, and in any
case we leave them for later investigation.

\smallskip
This brings us to the presentation of our main results:
\begin{theorem}\label{thm:main}
For each negative fundamental discriminant $D \neq -\ell$ we have
\begin{equation}\label{eqn:main_neg}
N_{D_\ell}(D^{\frac{\ell-1}{2}})=\begin{cases}
N^*_{F_\ell}((-1)^0\ell^{\ell-2}|D|^{\frac{\ell-1}{2}})
+
N^*_{F_\ell}((-1)^0\ell^{\ell}|D|^{\frac{\ell-1}{2}})
&\text{ if $\ell\nmid D$}\;,\\
N^*_{F_\ell}((-1)^0\ell^{\frac{\ell - 3}{2}}|D|^{\frac{\ell-1}{2}})
+
N^*_{F_\ell}((-1)^0\ell^{\ell}|D|^{\frac{\ell-1}{2}})
&\text{ if $\ell\mid D$ and $\ell\equiv1\pmod4$}\;,\\
N^*_{F_\ell}((-1)^0\ell^{\frac{\ell - 5}{2}}|D|^{\frac{\ell-1}{2}})
+
N^*_{F_\ell}((-1)^0\ell^{\ell}|D|^{\frac{\ell-1}{2}})
&\text{ if $\ell\mid D$ and $\ell\equiv3\pmod4$}\;.

\end{cases}
\end{equation}
\end{theorem}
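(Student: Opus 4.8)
The plan is to translate both sides into counts of order-$\ell$ characters via class field theory and to match them by a generalized Scholz reflection over $K_z=\Q(\sqrt D,\ze)$. For the left-hand side, a $D_\ell$-field of discriminant exactly $D^{(\ell-1)/2}$ is the degree-$\ell$ subfield of a cyclic degree-$\ell$ extension $M/K$, $K=\Q(\sqrt D)$, with $\Gal(M/\Q)$ dihedral; the conductor--discriminant formula shows that the exponent $(\ell-1)/2$ forces $M/K$ to be unramified, so $M/K$ arises from an order-$\ell$ character $\chi$ of $\Cl(K)$. Because complex conjugation acts on $\Cl(K)$ by inversion, every such $\chi$ automatically satisfies $\chi^c=\chi^{-1}$ and so gives a dihedral rather than cyclic extension; as conjugate characters give the same field, $N_{D_\ell}(D^{(\ell-1)/2})=(\ell^{r}-1)/(\ell-1)$ with $r=\rk_\ell\Cl(K)$, a pure $\ell$-rank count for $K$.

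For the right-hand side, Theorem \ref{thm:char_mf} and Lemma \ref{discsuffices} show that each counted $F_\ell$-field has Galois closure $E'$ a cyclic degree-$\ell$ extension of the mirror field $K'$ (cyclic of degree $\ell-1$ over $\Q$), unramified outside $\ell$, and that the $\ast$-condition $\tau\sigma\tau^{-1}=\sigma^g$ says precisely that the associated order-$\ell$ character $\psi$ of a ray class group $\Cl_{\ell^k}(K')$ lies in the $\om$-eigenspace for $\Gal(K'/\Q)=\langle\tau\rangle$, i.e.\ $\psi^\tau=\psi^g$ with $\om(\tau)\equiv g$. Viewed inside $K_z$, whose Galois group is $\langle c\rangle\times\langle\tau\rangle$, the characters coming from $K$ occupy the $\tau$-invariant ($\om^0$) eigenspace with $c$ acting by $-1$, while those coming from $K'$ occupy the $\om^1$-eigenspace, again with $c=-1$ (since $\Gal(K_z/K')=\langle c\tau^{(\ell-1)/2}\rangle$ and $g^{(\ell-1)/2}\equiv-1$). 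Crucially these two eigenspaces form a reflection pair, as $1-0=1$ and the reflection twist is by the odd character $\om$.

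The identity should then follow from a generalized Scholz--Leopoldt reflection theorem comparing the $\om^0$- and $\om^1$-eigenspaces over $K_z$, whose ranks agree up to a defect supported at $\ell$ and $\infty$. The archimedean part is controlled by the observation that for $D<0$ the archimedean conjugation of $K_z$ equals $c\tau^{(\ell-1)/2}$, which fixes $K'$; hence the $F_\ell$-fields are totally real, giving the normalization $(-1)^0$ on the right and matching the $(\ell-1)/2$ complex places recorded by $D^{(\ell-1)/2}$ on the left. The defect at $\ell$ produces the two summands: grading the $\om^1$-eigenspace by the $\ell$-conductor of $\psi$ yields exactly two values, the larger giving the discriminant power $\ell^\ell$ (the analogue of the $N_3(-27D)$ term in Theorem \ref{thm_NO}) and the smaller the mirror-conductor power (the analogue of $N_3(D^\ast)$). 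Computing this smaller $\ell$-conductor from the ramification of $\ell$ in $K'$, which depends on whether $\Q(\sqrt{(-1)^{(\ell-1)/2}\ell})$ is real or imaginary, yields the dichotomy $\ell\equiv1$ versus $\ell\equiv3\pmod4$, while $\ell\mid D$ shifts the tame part of the conductor.

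The main obstacle is to make the reflection exact rather than a mere inequality of ranks, and in particular to carry out the local analysis at $\ell$ precisely enough to separate the two $\ell$-conductors and to obtain the stated exponents uniformly in $\ell$. This means tracking the local Kummer conditions at the prime above $\ell$ in $K_z$, correctly accounting for the distinguished element $\ze$ (equivalently $\sqrt{(-1)^{(\ell-1)/2}\ell}\in K_z$) responsible both for the second summand and for the mod-$4$ split, and verifying that the resulting bijection of characters respects the normalization $g'=g$ built into $N^\ast_{F_\ell}$. Once the contributions at $\ell$ and $\infty$ are pinned down, the global identity follows from the reflection theorem together with the automatic anti-invariance established on the quadratic side.
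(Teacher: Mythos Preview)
Your plan is essentially the paper's own strategy: pass to $K_z$, decompose ray class and Selmer data into $\Gamma$-eigenspaces, identify the $(\tau-g,\tau_2+1)$-piece with characters on the $K'$ side and the $(\tau-1,\tau_2+1)$-piece with data coming from $K$, and then match them by a reflection/Kummer duality. Your identification of the signature, of the $\ast$-condition with the $\omega^1$-eigenspace, and of the two $\ell$-conductor strata is all in line with what the paper does.

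The point you flag as ``the main obstacle'' is exactly where the paper's work lies, and your proposal does not yet supply it. The paper does not invoke an abstract Scholz--Leopoldt inequality and then repair the defect; instead it makes the duality \emph{perfect} from the outset by choosing the modulus $\mfb=(1-\ze)^\ell$ and appealing to Hecke's theorem, which says that $K_z(\sqrt[\ell]{\alpha})$ has conductor dividing $\mfb$ precisely when $\alpha$ is an $\ell$-virtual unit. This yields a perfect, $\Gamma$-equivariant pairing $R_{\mfb}\times S_\ell(K_z)\to\bmu_\ell$, hence an exact equality $|G_{\mfb}|=|S_\ell(K_z)[T^*]|$; the right side is then shown to equal $|S_\ell(K)|=|\Cl(K)/\Cl(K)^\ell|$ for $D<0$ (the extra factor of $\ell$ appears only for $D>0$, from the fundamental unit). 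So the ``distinguished element $\ze$'' you mention enters not as a correction term but through the specific choice of $\mfb$ that makes the pairing nondegenerate.

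For the conductor grading, your assertion that ``exactly two values'' occur is correct but not automatic: the paper proves it (Proposition~\ref{prop:cond_restrict}) by a local computation at the prime above $\ell$ in $K_z$, reducing via the Artin--Hasse exponential to an eigenspace condition on $P^a/P^{a+1}$ and then citing a theorem of Cohen--Diaz~y~Diaz--Olivier to pin down the admissible $a$ modulo $\ell-1$ (or $(\ell-1)/2$). In the case $\ell\mid D$, $\ell\equiv3\pmod4$ this actually leaves three candidate conductors, and one is eliminated only by the global constraint that $v_\ell(\Disc(E))\ne\ell-1$ for a degree-$\ell$ field. If you carry out these two ingredients --- the Hecke-based perfect pairing and the explicit local conductor restriction --- your outline becomes the paper's proof.
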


For positive discriminants we obtain the 
following close analogue, reflecting the difference between positive
and negative $D$ in the Ohno--Nakagawa relation.

\begin{theorem}\label{thm:main_pos}
For each positive fundamental discriminant $D \neq 1, \ell$ we have
\begin{equation}\label{eqn:main_pos}
\ell N_{D_\ell}(D^{\frac{\ell-1}{2}}) + 1 =\begin{cases}
N^*_{F_\ell}((-1)^{\frac{\ell - 1}{2}}\ell^{\ell-2}D^{\frac{\ell-1}{2}})
+
N^*_{F_\ell}((-1)^{\frac{\ell - 1}{2}}\ell^{\ell}D^{\frac{\ell-1}{2}})
&\text{ if $\ell\nmid D$}\;,\\
N^*_{F_\ell}((-1)^{\frac{\ell - 1}{2}}\ell^{\frac{\ell - 3}{2}}D^{\frac{\ell-1}{2}})
+
N^*_{F_\ell}((-1)^{\frac{\ell - 1}{2}}\ell^{\ell}D^{\frac{\ell-1}{2}})
&\text{ if $\ell\mid D$ and $\ell\equiv1\pmod4$}\;,\\
N^*_{F_\ell}((-1)^{\frac{\ell - 1}{2}}\ell^{\frac{\ell - 5}{2}}D^{\frac{\ell-1}{2}})
+
N^*_{F_\ell}((-1)^{\frac{\ell - 1}{2}}\ell^{\ell}D^{\frac{\ell-1}{2}})
&\text{ if $\ell\mid D$ and $\ell\equiv3\pmod4$}\;.
\end{cases}
\end{equation}\end{theorem}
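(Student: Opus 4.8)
The plan is to treat Theorem~\ref{thm:main_pos} and Theorem~\ref{thm:main} as a single computation, organized so that both sides of each identity count projective points in $\F_\ell$-vector spaces attached to class groups, with the only difference between the positive and negative cases being one extra dimension coming from the unit group of the real quadratic field. First I would convert the left-hand side into an eigenspace count. By class field theory, a $D_\ell$-field of discriminant $D^{(\ell-1)/2}$ is the same datum as a cyclic degree-$\ell$ extension $L/K$ of prescribed conductor, with $K=\Q(\sqrt D)$, on which $\Gal(K/\Q)$ acts by inversion; such $L$ correspond to surjections onto $C_\ell$ from the minus part of a suitable ray class group of $K$, taken modulo the scaling action of $\F_\ell^\times$. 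Writing $d$ for the $\F_\ell$-dimension of this minus eigenspace, we obtain $N_{D_\ell}(D^{(\ell-1)/2})=(\ell^d-1)/(\ell-1)$, the number of points of $\mathbb{P}^{d-1}(\F_\ell)$.

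Next I would convert the right-hand side. By Theorem~\ref{thm:char_mf} and Lemma~\ref{discsuffices}, each $F_\ell$-field counted by $N^*_{F_\ell}$ is a degree-$\ell$ Kummer extension of the mirror field $K'$, and the starred condition forces its Kummer generator to lie in a single prescribed $\tau$-eigencomponent (the one matching the fixed primitive root $g$) of an $\ell$-Selmer group of $K'$. The two discriminant terms in each case of \eqref{eqn:main_pos} correspond to the two possible local conditions at the prime above $\ell$ --- ``less ramified'' versus ``maximally ramified'' --- so their sum counts all lines in that eigencomponent: $N^*_{F_\ell}(\cdots)+N^*_{F_\ell}(\cdots)=(\ell^{d'}-1)/(\ell-1)$, where $d'$ is the dimension of the eigencomponent. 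The three cases $\ell\nmid D$ and $\ell\mid D$ with $\ell\equiv 1,3\pmod 4$ are precisely the three possibilities for the ramification of $\ell$ in $K_z=\Q(\sqrt D,\zeta_\ell)$, which fix the allowed conductor exponents.

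The crux is to show $d'=d+1$ for $D>0$, whereas $d'=d$ for $D<0$ (the latter being Theorem~\ref{thm:main}). This is a Leopoldt-style reflection carried out over $K_z$, whose Galois group is $\Gal(K/\Q)\times\Gal(\Q(\zeta_\ell)/\Q)$: the minus eigenspace governing the left-hand side and the prescribed $\tau$-eigencomponent governing the right-hand side are reflection partners under the Teichm\"uller twist $\chi\mapsto\omega\chi^{-1}$, and the reflection is an equality of dimensions up to the contribution of the global units lying in the reflected eigencomponent. The decisive point is that complex conjugation $c\in\Gal(K_z/\Q)$ equals $\tau^{(\ell-1)/2}$ when $D>0$ but $c_D\tau^{(\ell-1)/2}$ when $D<0$, where $c_D$ generates $\Gal(K/\Q)$; since a character contributes to $\calO_{K_z}^\times\otimes\F_\ell$ precisely when it is even with respect to $c$, the extra factor $c_D$ flips the parity of the reflected character, so for $D>0$ exactly one additional unit dimension enters and $d'=d+1$. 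The elementary identity
\begin{equation*}
\ell\cdot\frac{\ell^{d}-1}{\ell-1}+1=\frac{\ell^{\,d+1}-1}{\ell-1}
\end{equation*}
then converts this single extra dimension into the prefactor $\ell$ and the additive $+1$ in \eqref{eqn:main_pos}; the same signature bookkeeping turns the $(-1)^0$ of the negative case into the $(-1)^{(\ell-1)/2}$ here, since the reflected $F_\ell$-fields now carry $(\ell-1)/2$ pairs of complex places.

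The main obstacle will be this reflection-and-unit step: making the Spiegelungssatz effective rather than a mere inequality, i.e.\ pinning down the reflected eigenspace dimension exactly, controlling the local conditions at $\ell$ so that the two conductor exponents in each case come out precisely as stated, and verifying that the global unit contributes one and only one extra dimension for real $D$. Everything else --- the class field theory and the Kummer theory of the first two steps --- is formal and shared essentially verbatim with the proof of Theorem~\ref{thm:main}; the genuinely new input for the positive case is the parity computation for $c$ and the resulting single jump in dimension.
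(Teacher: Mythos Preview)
Your approach matches the paper's: both sides are counted as $(\ell^n-1)/(\ell-1)$ for appropriate $\F_\ell$-dimensions, linked via the Kummer pairing/Spiegelung over $K_z$, with the extra dimension for $D>0$ coming from the unit group of the real field $K$ (this is exactly Proposition~\ref{prop:size_gb}) and converted into $\ell(\cdot)+1$ by the identity $\ell\cdot\frac{\ell^d-1}{\ell-1}+1=\frac{\ell^{d+1}-1}{\ell-1}$. One terminological correction: on the $K'$ side the $F_\ell$-fields are parametrized by an eigenspace of a \emph{ray class group} of $K'$ (Propositions~\ref{prop:iso_red} and~\ref{prop:fl_bij}), not a Selmer group, since $\zeta_\ell\notin K'$ and Kummer theory must be done over $K_z$; in the paper the Selmer group sits on the $K$ side (Propositions~\ref{prop:KztoK} and~\ref{prop:size_gb}), dual to the ray-class side, which is the reverse of how you phrased it.
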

Nakagawa's Theorem \ref{thm_NO} is the case $\ell = 3$ of these results.

In fact we prove something slightly stronger: The right-hand sides of~(\ref{eqn:main_neg}) and~(\ref{eqn:main_pos}) list two possibilities $\ell^b$ and $\ell^{b'}$ for the power of $\ell$ in the discriminants of $F_\ell$-fields, but they do not rule out other powers of $\ell$ that may occur in $F_\ell$-field discriminants with the desired Galois condition. Our proof (see Proposition \ref{prop:cond_restrict}) shows that in fact there are no $F_\ell$-fields with the given Galois condition and exponents of $\ell$ between 0 and $\frac{3\ell-1}{2}$ other than the ones that appear on the right-hand sides of~(\ref{eqn:main_neg}) and~(\ref{eqn:main_pos}). (Larger exponents do occur, and they do not appear to correspond to $D_\ell$-fields.)

\medskip
A special consideration arises when $\ell \equiv 1 \pmod 4$. Suppose that $d \neq 1$ is a fundamental discriminant not divisible by $\ell$.
Then, $D_\ell$-fields of discriminant $D^{\frac{\ell-1}{2}}$ with $D = d$ and $D = d \ell$ respectively 
correspond to $F_\ell$-fields enumerated on the first and second lines on the right of \eqref{eqn:main_neg} or \eqref{eqn:main_pos}.
It is easily checked that the discriminants and signatures of $F_\ell$-fields enumerated in the first terms on these two lines
(for $D = d$ and $D = d \ell$ respectively) are identical, so that the only
difference between them consists of the condition implied by the star.

It will be proved later that $\Q(\sqrt{d})$ and $\Q(\sqrt{\ell d})$ have the same mirror field when $\ell \equiv 1 \pmod 4$.
However, our definition of $\tau \in \Gal(K'/\Q)$ involved lifting an element of $\Gal(\Q(\ze)/\Q)$ to $\Gal(K_z/K)$ and therefore 
depends on $K$.
Writing $\tau'$ and $\tau''$ for the elements $\tau$ determined when $K = \Q(\sqrt{d})$
and $K = \Q(\sqrt{\ell d})$ respectively, we will see later (in Remark \ref{rem:explain_minus}) that 
the condition $\tau'' \sigma \tau''^{-1} = \sigma^g$ of \eqref{def:fl} is equivalent to
$\tau' \sigma \tau'^{-1} = \sigma^{-g}$. (Note that for a primitive root $g \pmod \ell$ with $\ell \equiv 1 \pmod 4$,
$-g$ is also a primitive root.) 

When $\ell = 5$ there are only two primitive roots, so letting $g$ be either of them we find that all $F_5$-fields
satisfy $\tau' \sigma \tau'^{-1} = \sigma^{g}$ or $\tau' \sigma \tau'^{-1} = \sigma^{-g}$. Therefore, by counting
$D_5$-fields of discriminant with $D = d$ and $D = \ell d$ together we obtain a corresponding count of $F_5$-fields
without any Galois condition:
\begin{corollary}\label{cor:main_cor_5}
If $D$ is a negative fundamental discriminant coprime to 5, we have
\begin{equation}\label{eqn:main_cor_neg}
N_{D_5}\big(D^2\big)
+
N_{D_5}\big(5D^2 \big)
=
N_{F_5}\big((-1)^0 5^3 |D|^2\big)
+ 
N_{F_5}\big((-1)^0 5^5 |D|^2\big)
+
N_{F_5}\big((-1)^0 5^7 |D|^2\big)\;.
\end{equation}
and if $D \neq 1$ is a positive fundamental discriminant coprime to 5, we have 
\begin{equation}\label{eqn:main_cor_pos}
5 \Big( N_{D_5}\big(D^2\big)
+
N_{D_5}\big((5D)^2 \big) \Big) + 2
=
N_{F_5}\big((-1)^2 5^3 D^2\big)
+ 
N_{F_5}\big((-1)^2 5^5 D^2\big)
+
N_{F_5}\big((-1)^2 5^7 D^2\big)\;.
\end{equation}
\end{corollary}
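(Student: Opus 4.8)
The plan is to apply Theorem~\ref{thm:main} (resp.\ Theorem~\ref{thm:main_pos}) twice, once with $D=d$ and once with $D=5d$, and to add the two resulting identities. Since $\gcd(d,5)=1$ and $5$ is itself a fundamental discriminant, $5d$ is a fundamental discriminant of the same sign as $d$ (and neither $d$ nor $5d$ equals the excluded values $\mp 5$ or $1$), so both applications are legitimate. Because $5\nmid d$ the first line of the relevant equation applies to $D=d$, whereas the second line applies to $D=5d$ (as $5\mid 5d$ and $5\equiv1\pmod4$). A direct bookkeeping of the exponents of $5$ then shows that the two right-hand sides involve exactly the three discriminants $(-1)^{\eps}5^3|D|^2$, $(-1)^{\eps}5^5|D|^2$, and $(-1)^{\eps}5^7|D|^2$ (with $\eps=0$ in the negative case and $\eps=2$ in the positive case): for $D=d$ the exponents are already $5^{\ell-2}=5^3$ and $5^\ell=5^5$, while for $D=5d$ the substitution $|5d|^2=25|d|^2$ raises $5^{(\ell-3)/2}=5^1$ and $5^\ell=5^5$ to $5^3$ and $5^7$. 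In particular $5^3|D|^2$ occurs in \emph{both} applications, $5^5|D|^2$ only in the $D=d$ application, and $5^7|D|^2$ only in the $D=5d$ application.

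Next I would reconcile the two Galois conditions. The two applications count $F_5$-fields satisfying the star condition relative to $\tau'$ (for $K=\Q(\sqrt d)$) and to $\tau''$ (for $K=\Q(\sqrt{5d})$) respectively; by the discussion preceding the corollary these two quadratic fields share the same mirror field $K'$, and by Remark~\ref{rem:explain_minus} the condition $\tau''\sigma\tau''^{-1}=\sigma^g$ is equivalent to $\tau'\sigma\tau'^{-1}=\sigma^{-g}$. Writing $N^*_{F_5}(\,\cdot\,;g)$ for the star-count with condition $\tau'\sigma\tau'^{-1}=\sigma^g$, the $D=d$ application thus contributes $N^*_{F_5}(\,\cdot\,;g)$ and the $D=5d$ application contributes $N^*_{F_5}(\,\cdot\,;-g)$. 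Since $5\equiv1\pmod4$ both $g$ and $-g$ are primitive roots modulo $5$, and as $\varphi(4)=2$ they are the \emph{only} primitive roots modulo $5$ (explicitly $\{2,3\}$ with $3\equiv-2$). By Lemma~\ref{discsuffices} every $F_5$-field of one of the three discriminants above has $K'$ as its $C_4$-subfield, and such a field necessarily satisfies $\tau'\sigma\tau'^{-1}=\sigma^{g'}$ for some primitive root $g'$; hence $g'\in\{g,-g\}$, exactly one of the two occurs, and the two star-conditions partition the $F_5$-fields of each fixed discriminant. Consequently, for any of the three discriminants, $N^*_{F_5}(\,\cdot\,;g)+N^*_{F_5}(\,\cdot\,;-g)=N_{F_5}(\,\cdot\,)$.

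It then remains to handle the asymmetry that $5^5|D|^2$ appears only with condition $g$ and $5^7|D|^2$ only with condition $-g$. Here I would invoke Proposition~\ref{prop:cond_restrict}: for $\ell=5$ we have $\tfrac{3\ell-1}{2}=7$, and the proposition asserts that, for a fixed Galois condition, the only exponents of $5$ in the range $[0,7]$ that can occur are those appearing on the relevant right-hand side, namely $\{3,5\}$ for condition $g$ and $\{3,7\}$ for condition $-g$. Since $5\in[0,7]$ and $5\notin\{3,7\}$, no $F_5$-field of discriminant $5^5|D|^2$ satisfies condition $-g$; since $7\in[0,7]$ and $7\notin\{3,5\}$, no $F_5$-field of discriminant $5^7|D|^2$ satisfies condition $g$. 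Hence $N^*_{F_5}(5^5|D|^2;g)=N_{F_5}(5^5|D|^2)$ and $N^*_{F_5}(5^7|D|^2;-g)=N_{F_5}(5^7|D|^2)$, while the two copies of $5^3|D|^2$ add to $N_{F_5}(5^3|D|^2)$. Adding the two instances of Theorem~\ref{thm:main} (resp.\ Theorem~\ref{thm:main_pos}) and substituting these identities yields exactly~\eqref{eqn:main_cor_neg} (resp.~\eqref{eqn:main_cor_pos}), the factor $5$ and the added constant $2$ in the positive case arising from summing $5N_{D_5}(d^2)+1$ and $5N_{D_5}((5d)^2)+1$.

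The main obstacle is the exponent bookkeeping of the last paragraph: one must be certain that the exponents $5$ (under condition $-g$) and $7$ (under condition $g$) genuinely lie in the range $[0,\tfrac{3\ell-1}{2}]$ covered by Proposition~\ref{prop:cond_restrict} and truly fail to occur there, so that the single star-count already equals the full count $N_{F_5}$ for those two discriminants. The remaining delicate point, which must be carried over from the earlier parts of the paper, is the verification that $\Q(\sqrt d)$ and $\Q(\sqrt{5d})$ share the mirror field $K'$ and that $K'$ is the unique admissible $C_4$-subfield for each of the three discriminants, so that the unstarred $N_{F_5}$ really decomposes as the sum of the two star-counts.
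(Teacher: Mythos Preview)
Your proposal is correct and follows essentially the same approach as the paper: the paper's ``proof'' is the paragraph immediately preceding the corollary together with the ``slightly stronger'' remark after Theorems~\ref{thm:main}--\ref{thm:main_pos}, namely to add the two instances for $D=d$ and $D=5d$ and use that $\{g,-g\}$ exhausts the primitive roots modulo $5$. You have correctly supplied the one detail the paper leaves implicit---that the asymmetric terms $5^5|d|^2$ (only under condition $g$) and $5^7|d|^2$ (only under condition $-g$) already give the full unstarred count because Proposition~\ref{prop:cond_restrict} forbids the complementary Galois condition at those exponents.
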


Another (immediate) corollary of our results is that $F_\ell$-fields of certain discriminants must exist.
\begin{corollary}
For each positive fundamental discriminant $D$ coprime to $\ell - 1$,
there exists at least one $F_\ell$-field with discriminant of the form $(-1)^{\frac{\ell - 1}{2}} \ell^a D^{\frac{\ell - 1}{2}}$,
for some $a$ as described above. If $\ell \equiv 1 \pmod 4$, there exist at least two.

\end{corollary}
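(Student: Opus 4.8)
The plan is to extract both assertions directly from the positive-discriminant identity, Theorem~\ref{thm:main_pos}, which I regard as the only input needed. First consider the existence of a single field. Fix a positive fundamental discriminant $D\neq 1,\ell$ as in the statement. Whichever of the three lines of~\eqref{eqn:main_pos} applies, its left-hand side equals $\ell\,N_{D_\ell}(D^{\frac{\ell-1}{2}})+1\ge 1>0$, while the right-hand side is a sum of two nonnegative integers of the form $N^*_{F_\ell}(\cdots)$. Hence at least one of these two terms is positive, and since $N^*_{F_\ell}\le N_{F_\ell}$ and the fields counted by $N^*_{F_\ell}$ are in particular $F_\ell$-fields, there is at least one $F_\ell$-field whose discriminant has the advertised shape $(-1)^{\frac{\ell-1}{2}}\ell^{a}D^{\frac{\ell-1}{2}}$, with $a$ one of the two exponents appearing on that line. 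This already gives the first claim.

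For the second claim assume $\ell\equiv1\pmod4$ and write $d=D$. Assuming $\ell\nmid d$ (so that $\ell d$ is again a positive fundamental discriminant, using that $\ell$ itself is a fundamental discriminant when $\ell\equiv1\pmod4$), the first line of~\eqref{eqn:main_pos} governs $D=d$ and the second line governs $D=\ell d$. I would then apply the previous paragraph twice: once to $D=d$, producing an $F_\ell$-field $E_1$, and once to $D=\ell d$, producing an $F_\ell$-field $E_2$. Rewriting $(\ell d)^{\frac{\ell-1}{2}}=\ell^{\frac{\ell-1}{2}}d^{\frac{\ell-1}{2}}$ shows that the discriminant of $E_2$, and trivially that of $E_1$, is again of the form $(-1)^{\frac{\ell-1}{2}}\ell^{a}d^{\frac{\ell-1}{2}}$, so both fields are of the type required by the corollary with its $D$ equal to $d$.

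It remains to show that $E_1\neq E_2$, and this is where the only genuine argument lies. By the discussion preceding Corollary~\ref{cor:main_cor_5} and by Remark~\ref{rem:explain_minus}, the fields $\Q(\sqrt d)$ and $\Q(\sqrt{\ell d})$ share a common mirror field $K'$, so the two counts $N^*_{F_\ell}$ are taken with respect to the \emph{same} reference frame $(K',\tau',\sigma)$; moreover the star-condition imposed for $D=d$ is $\tau'\sigma\tau'^{-1}=\sigma^{g}$, whereas for $D=\ell d$ it becomes $\tau'\sigma\tau'^{-1}=\sigma^{-g}$. Since the primitive root $g'$ determined by an $F_\ell$-field through $\tau'\sigma\tau'^{-1}=\sigma^{g'}$ is an invariant of the field (independent of the choices of lift and of $\sigma$), and since $g\not\equiv-g\pmod\ell$ for odd $\ell$, no single field can satisfy both conditions. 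Hence $E_1$ and $E_2$ are distinct, which yields the desired two fields.

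I expect the main thing to get right to be precisely this last distinctness step together with the bookkeeping that makes it meaningful: one must confirm that $-g$ is again a primitive root exactly when $\ell\equiv1\pmod4$, that both star-conditions genuinely refer to one and the same $(K',\tau',\sigma)$, and that the two possible discriminant exponents for each application collapse into the single form $(-1)^{\frac{\ell-1}{2}}\ell^{a}d^{\frac{\ell-1}{2}}$. All of this is furnished by the cited discussion and remark; beyond it, the corollary is a pure positivity statement read off from Theorem~\ref{thm:main_pos}.
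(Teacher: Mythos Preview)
Your argument is correct and is precisely what the paper has in mind: the corollary is stated as ``(immediate)'' with no proof, and your two ingredients---the positivity of the left side of \eqref{eqn:main_pos}, and for $\ell\equiv1\pmod4$ the mutual exclusivity of the star-conditions for $D=d$ and $D=\ell d$ established in the discussion preceding Corollary~\ref{cor:main_cor_5} and in Remark~\ref{rem:explain_minus}---are exactly the intended reasoning. One small point: for the second claim you treat only the case $\ell\nmid D$; if $\ell\mid D$, write $D=\ell d'$ and run the identical argument with $d'$ in place of $d$, observing that the resulting discriminants are again of the form $(-1)^{\frac{\ell-1}{2}}\ell^{a}D^{\frac{\ell-1}{2}}$.
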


{\itshape Further directions.}
There are multiple directions in which one might ask for extensions of our results. 
The most obvious is to drop the requirement that $D$ be a fundamental discriminant.
However, as was observed by Nakagawa, no simple relation appears to hold even for $\ell = 3$.
Examining a table
of cubic fields suggests that any result along these lines
would need to account for more subtle information than simply counts of field discriminants.

Similarly, one could attempt to
allow additional factors of $\ell$ in our counts for $D_\ell$-fields. This 
might involve generalizations of the results of Section \ref{sec_gb}, some of which are carried out
in Section 8 of \cite{CTl}, along with further study of the sizes of various groups appearing in these results.

Motivated by Nakagawa's results, one might try to prove a result counting {\itshape ring}
discriminants. In this context, Ohno and Nakagawa {\itshape did} obtain beautiful 
and simple relations among all discriminants, by considering (equivalently):
cubic {\itshape rings} (including reducible and nonmaximal rings); 
binary cubic forms up to $\SL_2(\Z)$-equivalence; or the Shintani zeta functions associated
to this lattice of binary cubic forms. 

The equivalences among these objects do not naturally generalize to $\ell > 3$, and in particular there is no
naturally associated zeta function which is known (to the authors, at least) to have good analytic properties.
Therefore, it seems that the Ohno--Nakagawa relations for cubic rings may be special to the prime $\ell = 3$.
However, it is not out of the question that our work could be extended to an Ohno--Nakagawa
relation counting appropriate subsets of the set of rings of rank $\ell$. In any case work
of Nakagawa \cite{N4} and Kaplan, Marcinek, and Takloo-Bighash \cite{KMTB} (among others) suggests that enumerating
such rings is likely to be quite difficult.



\begin{remark} As F.\ Calegari explained to us, alternative proofs of our results can also be given 
in the language of cohomology and Galois representations,
as a consequence of Poitou-Tate duality \cite{poitou, tate}
and a formula of Greenberg \cite{greenberg} and Wiles \cite{wiles}
(see also Theorem 2.18 of \cite{DDT}).
\end{remark}
\medskip
{\itshape Methods of proof and summary of the paper.} 
The proofs involve the use of class field theory and Kummer theory, along the lines 
developed by the first author
and a variety of collaborators (see, e.g., 
\cite{C_survey, CDO_quartic, CM, CT3, CT4, CTl}) to enumerate fields
with fixed resolvent. Especially relevant is work of the first and third authors \cite{CTl}, giving an explicit 
formula for the Dirichlet series $\sum_K |\Disc(K)|^{-s}$, where the sum is over all $D_{\ell}$-fields $K$ with a {\itshape fixed}
quadratic resolvent. The results of the present paper (or, for $\ell = 3$, of Nakagawa) are required to put this formula
into its most explicit form, as a sum of Euler products indexed by $F_{\ell}$-fields. Our main theorem precisely determines the indexing set of $F_\ell$-fields, and yields the constant term of the main identity of \cite{CTl}.

Our work has an earlier antecedent in the proof of the Scholz reflection principle, as presented for example in Washington's
book \cite{wash}. Let $K, K_z,$ and $K'$ be as described previously.
The technical heart of this paper is the Kummer pairing of Corollary \ref{cor:pairing}, together with its consequence Proposition \ref{prop:size_gb}. 
Our variant of the pairing relates the ray class group $\Cl_{\b}(K_z)/\Cl_{\b}(K_z)^{\ell}$ (for an ideal $\b$ to be described)
with a subgroup of $K_z^{\times} / (K_z^{\times})^{\ell}$ known as an {\itshape arithmetic Selmer group}. Applying a theorem of Hecke will allow us
to conclude, in contrast to the situation in \cite{wash},
that this pairing is perfect.

It is also Galois equivariant, so we can isolate pieces of the ray class group and Selmer group which `come from'
subfields of $K_z$: the Selmer group comes from $K$ (Proposition \ref{prop:KztoK}), and the ray class group comes from $K'$
(Proposition \ref{prop:iso_red}). In Proposition \ref{prop:fl_bij} we will see directly that this ray class group counts $F_{\ell}$-fields.
On the Selmer side our argument will be less direct: 
computations from previous work yield Proposition \ref{prop:size_gb}, relating the size of this Selmer
group to
$|\Cl(K)/\Cl(K)^{\ell}|$. This latter class group counts the $D_\ell$-fields enumerated in our main theorems, as we recall in 
Lemma \ref{lem:dl_disc}.

In Section \ref{sec:prelim} we establish 
a variety of preliminary results on the arithmetic of $D_\ell$ and $F_\ell$-extensions. The most involved result is
Theorem \ref{thm:char_mf}, which guarantees that the Galois closure $E'$ of each $F_{\ell}$-field $E$ we count contains $K'$,
as required for our main theorems to make sense.

In Section \ref{sec_gb} we study the Kummer pairing as described above. We wrap up the proofs in Section \ref{sec:proofs}; 
essentially the only part remaining is to compute the discriminants of the $F_\ell$-fields being counted.
Finally, in Section \ref{sec_numerics} we describe some numerical tests of our results, accompanied by a comment on the {\tt Pari/GP} program (available from the 
third author's website) used to generate them.

\section*{Acknowledgments}
We would like to thank Frank Calegari, J\"urgen Kl\"uners, Hendrik Lenstra, David Roberts, John Voight, and the referees for helpful discussions and suggestions related
to this work.

\section{Preliminaries}\label{sec:prelim}

In this section we introduce some needed machinery and notation, and prove a variety of results about the $D_\ell$- and $F_\ell$-fields counted
by our theorems.
Throughout, $\ell$ is a fixed odd prime.

\subsection{Group theory}
We write $C_r$ for the cyclic group of order $r$ and $D_r$ for the dihedral group of order $2r$. When $r = \ell$ is an odd prime, we write
$F_\ell$ for the Frobenius group defined in \eqref{def:fl}.
The Frobenius group may be realized as the group of affine transformations
$x \mapsto ax + b$ over $\F_{\ell}$ with $a\in\F_\ell^\times$ and $b\in\F_\ell$. The subgroup generated by $\sigma$
(equivalently, the subgroup of translations) is normal, and all nontrivial proper normal subgroups contain $\langle\sigma\rangle$.

The following results are standard and easily checked (granting the basic results of class field theory), and
so we omit their proofs.

\begin{lemma}\label{lem:tower}
Suppose that $K \subset K' \subset K''$ is a tower of field extensions, with $K'/K$, $K''/K'$, and $K''/K$ all Galois, and write $\tau$ and $\sigma$
for elements of $\Gal(K'/K)$ and $\Gal(K''/K')$ respectively. Then:
\begin{enumerate}
\item $\Gal(K'/K)$ acts on $\Gal(K''/K')$ by conjugation; for $\tau \in \Gal(K'/K), \sigma \in \Gal(K''/K')$, the action is defined by
$\tau \sigma \tau^{-1} := \widetilde{\tau} \sigma \widetilde{\tau}^{-1}$ for an arbitrary lift $\widetilde{\tau}$ of $\tau$ to $\Gal(K''/K)$.
\item If further $K''$ corresponds via class field theory to an $\ell$-torsion 
quotient $\Cl_{\mfa}(K')/B$ of a ray class group of $K'$, on which $\tau \in \Gal(K'/K)$ acts by $\tau(x) = x^a$ for some $a \in \mathbb{F}_{\ell}^\times$,
then the conjugation action of $\Gal(K'/K)$ on $\Gal(K''/K')$ is given by $\tau \sigma \tau^{-1} = \sigma^a$.
\end{enumerate}
\end{lemma}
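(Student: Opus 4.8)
The plan is to treat the two parts in turn, with part~(2) resting on the Galois equivariance of the Artin reciprocity map and part~(1) being purely formal group theory. For part~(1), I would begin with the fundamental exact sequence of Galois theory: since $K''/K$ and $K'/K$ are both Galois, the subgroup $\Gal(K''/K')$ is normal in $\Gal(K''/K)$, and restriction to $K'$ gives
\[
1 \longrightarrow \Gal(K''/K') \longrightarrow \Gal(K''/K) \longrightarrow \Gal(K'/K) \longrightarrow 1\;.
\]
Conjugation inside $\Gal(K''/K)$ then furnishes a homomorphism $\Gal(K''/K) \to \Aut(\Gal(K''/K'))$, and the only point to verify is that it descends to the quotient $\Gal(K'/K)$, i.e.\ that $\widetilde{\tau}\sigma\widetilde{\tau}^{-1}$ is independent of the chosen lift $\widetilde{\tau}$. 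Two lifts differ by an element $\rho \in \Gal(K''/K')$; since in the setting relevant to us $K''/K'$ is abelian (it is cut out by a ray class group quotient, exactly as in part~(2)), conjugation by $\rho$ acts trivially on $\Gal(K''/K')$, so the value depends only on $\tau$ and the action is well defined.

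For part~(2) the essential input is the naturality (conjugation compatibility) of the Artin map. Let $\theta\colon \Cl_{\mfa}(K')/B \xrightarrow{\ \sim\ } \Gal(K''/K')$ denote the Artin reciprocity isomorphism attached to the class field $K''$. The standard conjugation formula for the Artin symbol asserts that for any $\tau \in \Gal(K'/K)$ with lift $\widetilde{\tau} \in \Gal(K''/K)$ and any ideal class $x$,
\[
\widetilde{\tau}\, \theta(x)\, \widetilde{\tau}^{-1} = \theta(\tau(x))\;,
\]
which one checks on unramified prime ideals $\mfp$ of $K'$ via $\widetilde{\tau}\,\Frob_{\mfp}\,\widetilde{\tau}^{-1} = \Frob_{\tau(\mfp)}$ (using that the restriction of $\widetilde{\tau}$ to $K'$ is $\tau$), and then extends multiplicatively to all of $\Cl_{\mfa}(K')/B$.

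Granting this, the conclusion follows mechanically. Writing $\sigma = \theta(x)$ for the class $x$ corresponding to $\sigma$, and invoking the hypothesis that $\tau$ acts on $\Cl_{\mfa}(K')/B$ by $x \mapsto x^a$, we obtain
\[
\tau\sigma\tau^{-1} = \widetilde{\tau}\,\theta(x)\,\widetilde{\tau}^{-1} = \theta(\tau(x)) = \theta(x^a) = \theta(x)^a = \sigma^a\;,
\]
where the penultimate equality uses that $\theta$ is a group isomorphism. I expect the only genuine content to be the conjugation formula for the Artin symbol; everything else is formal bookkeeping. The main point to get right is therefore to confirm that the $\Gal(K'/K)$-action on the ray class group appearing in the hypothesis is precisely the one induced by $\tau$ on ideals, so that it matches the action entering the Artin compatibility; once the two actions are identified, the displayed chain of equalities completes the proof.
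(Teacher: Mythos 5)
Your proof is correct: the paper itself omits any proof of this lemma, declaring it ``standard and easily checked (granting the basic results of class field theory),'' and your argument --- the exact sequence $1 \to \Gal(K''/K') \to \Gal(K''/K) \to \Gal(K'/K) \to 1$ together with the conjugation formula $\widetilde{\tau}\,\Frob_{\mfp}\,\widetilde{\tau}^{-1} = \Frob_{\tau(\mfp)}$ for the Artin symbol --- is precisely that standard check. You are also right to flag that the lift-independence in part (1) genuinely requires $\Gal(K''/K')$ to be abelian (otherwise conjugation is well defined only up to inner automorphisms); the lemma's statement glosses over this, but the hypothesis holds in every application in the paper, where $\Gal(E'/K')$ is cyclic of order $\ell$.
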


\subsection{Background on conductors}
We recall some basic facts about conductors of extensions of local and global fields, following \cite{serre_lcft}.

\begin{definition} Let $L/K$ be a finite abelian extension of local fields. Let $\mfp$ be the maximal ideal of $\Z_K$. We define the \emph{local conductor} $\mff(L/K)$ to be the least integer $n$ so that \[1+\mfp^n\subseteq N_{L/K}(L^\times)\;.\] \end{definition}

The local conductor thus gives us information about the ramification type of $L/K$. In particular:

\begin{proposition}\label{prop:lc_tower} \begin{enumerate} \item $L/K$ is unramified if $\mff(L/K)=0$, tamely ramified if $\mff(L/K) = 1$, and wildly ramified
if $\mff(L/K) > 1$. 
\item 
If $M/L/K$ is a tower of extensions of local fields with $M/K$ abelian and $L/K$ unramified, then $\mff(M/K)=\mff(M/L)$.
\end{enumerate} \end{proposition}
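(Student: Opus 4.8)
The plan is to derive both parts from local class field theory as developed in \cite{serre_lcft}, the only inputs being the reciprocity isomorphism together with its compatibility with the unit filtration and with norms. Throughout I write $U_K^{(0)} = \Z_K^\times$ and $U_K^{(n)} = 1 + \mfp^n$ for $n \ge 1$, so that $\mff(L/K)$ is by definition the least $n \ge 0$ with $U_K^{(n)} \subseteq N_{L/K}(L^\times)$.

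For part (1) I would translate the definition through the local reciprocity isomorphism $\theta_{L/K}\colon K^\times/N_{L/K}(L^\times) \xrightarrow{\sim} \Gal(L/K)$, whose kernel on $K^\times$ is exactly $N_{L/K}(L^\times)$. The key classical input is that $\theta_{L/K}$ carries the unit filtration onto the ramification filtration in the upper numbering, i.e.\ $\theta_{L/K}(U_K^{(n)}) = \Gal(L/K)^n$; this makes $\mff(L/K)$ the least $n$ with $\Gal(L/K)^n = 1$. Part (1) then reads off the trichotomy: $\Gal(L/K)^0$ is the inertia group, so $\mff(L/K)=0$ exactly when $L/K$ is unramified; $\Gal(L/K)^1$ is the wild inertia (the $p$-Sylow of inertia), so $\mff(L/K)\le 1$ exactly when $L/K$ is tamely ramified, whence $\mff(L/K)=1$ characterizes the tamely-but-not-unramified case and $\mff(L/K) > 1$ the wildly ramified one.

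For part (2) I would isolate two ingredients. First, a units lemma: for $L/K$ unramified one has $N_{L/K}(U_L^{(n)}) = U_K^{(n)}$ for every $n \ge 0$. I would prove this by passing to the graded pieces $U^{(n)}/U^{(n+1)}$: a uniformizer of $K$ remains a uniformizer of $L$, and the norm induces on these quotients either the field norm $k_L^\times \to k_K^\times$ (for $n=0$) or the trace $k_L \to k_K$ (for $n \ge 1$), both surjective since $k_L/k_K$ is a separable extension of finite fields; completeness then upgrades surjectivity on graded pieces to the stated equality. Second, the norm-compatibility of reciprocity for the tower $K\subseteq L\subseteq M$ with $M/K$ abelian, namely $\theta_{M/K}\circ N_{L/K} = \iota\circ\theta_{M/L}$, where $\iota\colon \Gal(M/L)\hookrightarrow\Gal(M/K)$ is the inclusion. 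Granting these, the deduction is short: for $x \in U_L^{(n)}$ the compatibility and the injectivity of $\iota$ give $\theta_{M/L}(x)=1 \iff N_{L/K}(x) \in \ker\theta_{M/K} = N_{M/K}(M^\times)$, so that, applying this to all of $U_L^{(n)}$ and using the units lemma, $U_L^{(n)}\subseteq N_{M/L}(M^\times) \iff U_K^{(n)} = N_{L/K}(U_L^{(n)}) \subseteq N_{M/K}(M^\times)$. Taking the least such $n$ on each side yields $\mff(M/L) = \mff(M/K)$.

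I expect the only real obstacle to be the forward inclusion in part (2) --- that $U_K^{(n)}\subseteq N_{M/K}(M^\times)$ forces $U_L^{(n)}\subseteq N_{M/L}(M^\times)$, rather than merely the reverse --- since a priori a norm from $K$ need not lift to a norm from $L$. The reciprocity compatibility dissolves this by turning both inclusions into the single condition $\theta_{M/L}(U_L^{(n)})=1$. One could instead argue by hand, using Hilbert 90 for the cyclic extension $L/K$ to express the ambiguity as a coboundary $\Frob(z)/z$, but then one must separately verify that such coboundaries are norms from $M$, which is precisely what the class field theory compatibility encodes cleanly; I would therefore take the reciprocity-map route.
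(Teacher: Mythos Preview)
Your proof is correct. The paper, however, does not prove this proposition at all: it is stated as one of several ``basic facts about conductors'' and is simply attributed to Serre's \cite{serre_lcft}. So there is no paper-proof to compare against beyond the citation.

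That said, your argument is exactly the standard one that a reader following the reference would reconstruct. For part~(1), the identification $\theta_{L/K}(U_K^{(n)}) = \Gal(L/K)^n$ is precisely the local reciprocity--ramification compatibility in Serre, and the trichotomy follows. For part~(2), your two ingredients --- surjectivity of $N_{L/K}$ on each $U^{(n)}$ in the unramified case, and the norm functoriality $\theta_{M/K}\circ N_{L/K} = \iota\circ\theta_{M/L}$ --- are the expected inputs, and the deduction is clean. Your diagnosis of the only subtle point (that the implication $U_K^{(n)}\subseteq N_{M/K}(M^\times)\Rightarrow U_L^{(n)}\subseteq N_{M/L}(M^\times)$ needs the injectivity of $\iota$ via reciprocity, not just elementary norm manipulations) is on target. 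There is nothing to correct; you have simply supplied what the paper chose to omit.
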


If $K=\Q_p$, we will sometimes write $\mff(L)$ rather than $\mff(L/\Q_p)$. Also, if $L/K$ is an abelian extension of \emph{global} fields, $\mfp$ a prime of $K$ and $\mfP$ a prime of $L$ above $\mfp$, we will sometimes write $\mff_{\mfp}(L/K)$ for $\mff(L_{\mfP}/K_{\mfp})$, since this does not depend on $\mfP$. Here, $L_{\mfP}$ and $K_{\mfp}$ denote the $\mfP$-adic and $\mfp$-adic completions of $L$ and $K$, respectively.

\begin{definition} Let $L/K$ be a finite abelian extension of global fields, set \[\mff_0(L/K)=\prod_{\mfp} \mfp^{\mff_{\mfp}(L/K)}\;,\] and let $\mff_\infty(L/K)$ denote the set of real places of $K$ ramified in $L$. The \emph{global conductor} of $L/K$ is defined to be the modulus $\mff(L/K)=\mff_0(L/K)\mff_\infty(L/K)$. \end{definition}

\begin{proposition}\label{prop:kw}
If $L/\Q$ is a finite abelian extension, then $\mff_0(L/\Q)$ is the ideal of $\Z$ generated by the least number $n$ so that $L\subseteq\Q(\zeta_n)$. \end{proposition}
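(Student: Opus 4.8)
\textit{Proof proposal.} The plan is to reduce the global statement to a prime-by-prime local computation and then identify each local conductor with the corresponding power of $p$ dividing the cyclotomic modulus. By the Kronecker--Weber theorem, $L\subseteq\Q(\zeta_n)$ for some $n$; let $n_0$ be the least such $n$, and write $n_0=\prod_p p^{a_p}$. Since $\Q(\zeta_m)=\Q(\zeta_{m/2})$ when $m\equiv2\pmod4$, minimality of $n_0$ forces $n_0\not\equiv2\pmod4$, so that $a_2\ne1$; this is exactly the point at which stating the result in terms of the \emph{least} $n$ (rather than a fixed cyclotomic modulus) removes the usual $2$-adic ambiguity. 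Because $\mff_0(L/\Q)=\prod_p p^{\mff_p(L/\Q)}$ by definition, it suffices to prove $\mff_p(L/\Q)=a_p$ for every prime $p$, where $\mff_p(L/\Q)=\mff(L_{\mfP}/\Q_p)$ is the local conductor exponent.

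First I would establish the upper bound $\mff_p(L/\Q)\le a_p$. Localizing the inclusion $L\subseteq\Q(\zeta_{n_0})$ at a prime above $p$ gives $L_{\mfP}\subseteq\Q_p(\zeta_{n_0})$. Writing $n_0=p^{a_p}m$ with $p\nmid m$, the extension $\Q_p(\zeta_m)/\Q_p$ is unramified, so Proposition~\ref{prop:lc_tower}(2) yields $\mff(\Q_p(\zeta_{n_0})/\Q_p)=\mff(\Q_p(\zeta_{p^{a_p}})/\Q_p)$. The norm group of $\Q_p(\zeta_{p^k})/\Q_p$ under local reciprocity is $p^{\Z}\cdot(1+p^k\Z_p)$, whose conductor exponent is $k$ (for $p$ odd this holds for all $k$, and for $p=2$ it holds for $k\ne1$, which is guaranteed here since $a_2\ne1$); hence $\mff(\Q_p(\zeta_{n_0})/\Q_p)=a_p$. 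Finally, conductors are monotone under subextensions: since $\Q_p\subseteq L_{\mfP}\subseteq\Q_p(\zeta_{n_0})$ gives $N_{\Q_p(\zeta_{n_0})/\Q_p}(\,\cdot\,)\subseteq N_{L_{\mfP}/\Q_p}(L_{\mfP}^\times)$, any $1+p^n\Z_p$ lying in the larger norm group also lies in the smaller one, so $\mff(L_{\mfP}/\Q_p)\le a_p$. This shows $\mff_0(L/\Q)\mid (n_0)$.

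For the reverse inequality I would argue by the minimality of $n_0$. Setting $c_p:=\mff_p(L/\Q)$ and $n':=\prod_p p^{c_p}$, the global conductor $\mff(L/\Q)$ divides the modulus $n'\cdot\infty$; since $\Q(\zeta_{n'})$ is precisely the ray class field of $\Q$ of modulus $n'\infty$, and $L/\Q$ is contained in the ray class field of any modulus divisible by its conductor, we obtain $L\subseteq\Q(\zeta_{n'})$. Minimality of $n_0$ then forces $n_0\mid n'$, i.e.\ $a_p\le c_p$ for all $p$, and combined with the previous step this gives $c_p=a_p$ and hence $\mff_0(L/\Q)=\prod_p p^{a_p}=(n_0)$. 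The main obstacle is this last local-to-global passage: one must know that the conductor assembled from the local norm groups is exactly the minimal modulus whose ray class field contains $L$, and that for the ground field $\Q$ this ray class field is the explicit cyclotomic field $\Q(\zeta_{n'})$. Both are standard consequences of global class field theory for $\Q$, and once they are invoked the remaining steps are the routine local conductor computations above.
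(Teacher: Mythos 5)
Your proof is correct, but note that the paper itself offers no proof of Proposition \ref{prop:kw}: it sits in the background subsection on conductors, whose facts are recalled from \cite{serre_lcft} without argument, so there is no in-paper proof to compare against. Your argument is the standard one and it closes: reduce to local conductors via the definition $\mff_0(L/\Q)=\prod_p p^{\mff_p(L/\Q)}$, compute that the norm group of $\Q_p(\zeta_{p^k})/\Q_p$ is $p^{\Z}\cdot(1+p^k\Z_p)$ (with the usual caveat at $p=2$, $k=1$, which you correctly eliminate by observing $n_0\not\equiv 2\pmod 4$), and then use that $L$ lies in the ray class field of its conductor, which over $\Q$ with modulus $n'\infty$ is exactly $\Q(\zeta_{n'})$. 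Two small elisions are worth flagging, though neither is a genuine gap. First, Proposition \ref{prop:lc_tower}(2) literally yields $\mff(\Q_p(\zeta_{n_0})/\Q_p)=\mff(\Q_p(\zeta_{n_0})/\Q_p(\zeta_m))$, not $=\mff(\Q_p(\zeta_{p^{a_p}})/\Q_p)$; to finish you need the standard (but unstated) fact that this conductor is unchanged under unramified base change, or else compute the norm group of $\Q_p(\zeta_{n_0})/\Q_p$ directly as $p^{f\Z}\cdot(1+p^{a_p}\Z_p)$, where $f$ is the order of $p$ modulo $m$. Second, your claim that minimality of $n_0$ forces the divisibility $n_0\mid n'$ tacitly uses $\Q(\zeta_a)\cap\Q(\zeta_b)=\Q(\zeta_{\gcd(a,b)})$; this can be avoided entirely, since your upper bound already gives $c_p\le a_p$ for all $p$, i.e.\ $n'\mid n_0$ and so $n'\le n_0$, while minimality of $n_0$ as the \emph{least} integer with $L\subseteq\Q(\zeta_{n'})$ gives $n_0\le n'$, whence $n_0=n'$ and $a_p=c_p$ for all $p$.
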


\begin{proposition}\label{prop:cd} If $L/K$ be a quadratic extension of global fields, then
$\mff_0(L/K)=\Disc(L/K)$.
\end{proposition}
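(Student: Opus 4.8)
The plan is to deduce this from the conductor--discriminant formula. Since $L/K$ is quadratic it is abelian, with $G := \Gal(L/K)$ cyclic of order $2$, so $\widehat{G}$ consists of the trivial character and a single nontrivial quadratic character $\chi$. The conductor--discriminant formula for abelian extensions of global fields (as in \cite{serre_lcft}) gives $\Disc(L/K) = \prod_{\psi \in \widehat{G}} \mff_0(\psi)$, where $\mff_0(\psi)$ is the finite part of the conductor of the id\`ele-class (equivalently ray-class) character attached to $\psi$ via the Artin map. The trivial character has trivial conductor, so the product collapses to $\Disc(L/K) = \mff_0(\chi)$, and everything reduces to identifying $\mff_0(\chi)$ with $\mff_0(L/K)$.

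For this identification I would work prime by prime, since $\mff_0(\chi) = \prod_{\mfp} \mfp^{\mff_{\mfp}(\chi)}$ and $\mff_0(L/K) = \prod_{\mfp} \mfp^{\mff_{\mfp}(L/K)}$ by definition, each exponent depending only on the completion at $\mfp$. Fix a finite prime $\mfp$ of $K$ and a prime $\mfP$ of $L$ above it. If $\mfp$ splits or is inert, then $L_{\mfP}/K_{\mfp}$ is split or unramified and the local component $\chi_{\mfp}$ is unramified, so both exponents vanish: on the extension side the unramified norm group contains $\calO_{K_{\mfp}}^{\times}$, whence $\mff_{\mfp}(L/K) = 0$ by Proposition \ref{prop:lc_tower}. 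If $\mfp$ ramifies, then $L_{\mfP}/K_{\mfp}$ is a ramified quadratic extension and $\chi_{\mfp}$ is its associated faithful character.

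The crux is then the local statement $\mff_{\mfp}(\chi) = \mff_{\mfp}(L/K)$ at a ramified prime. Under local Artin reciprocity the kernel of $\chi_{\mfp} \colon K_{\mfp}^{\times} \to \{\pm 1\}$ is exactly the norm group $N_{L_{\mfP}/K_{\mfp}}(L_{\mfP}^{\times})$; hence the least $n$ for which $\chi_{\mfp}$ is trivial on $1 + \mfp^n$ --- which is by definition $\mff_{\mfp}(\chi)$ --- equals the least $n$ for which $1 + \mfp^n \subseteq N_{L_{\mfP}/K_{\mfp}}(L_{\mfP}^{\times})$ --- which is $\mff_{\mfp}(L/K)$. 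Assembling these local equalities over all $\mfp$ gives $\mff_0(\chi) = \mff_0(L/K)$, and therefore $\Disc(L/K) = \mff_0(L/K)$.

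The step I expect to demand the most care is precisely this last identification, together with a clean invocation of the conductor--discriminant formula and the compatibility of the local and global conductors of $\chi$; these are standard consequences of class field theory but are the only substantive inputs. A fully self-contained alternative would bypass the conductor--discriminant formula and instead compute both exponents directly from an explicit generator $L_{\mfP} = K_{\mfp}(\sqrt{d})$: the unramified and tame cases give exponents $0$ and $1$ in agreement with Proposition \ref{prop:lc_tower}, but the wildly ramified case at residue characteristic $2$ forces one to track the higher unit filtration on both sides, and that wild computation would be the main obstacle along this route.
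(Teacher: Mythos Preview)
Your proposal is correct and takes essentially the same approach as the paper: the paper does not give a proof at all, merely noting (in a commented-out remark) that this is a special case of the conductor--discriminant formula from \cite{serre_lcft}, \S 4.4. Your argument is a fleshed-out version of exactly that citation, with the identification $\mff_0(\chi) = \mff_0(L/K)$ spelled out prime by prime via local class field theory.
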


\subsection{The field diagram}

We fix a primitive $\ell^\text{th}$ root of unity $\ze$ and a primitive root $g\pmod{\ell}$. Let $\ls=(-1)^{\frac{\ell-1}{2}}\ell$, so that $\Q(\sqrt{\ls})$ is the unique quadratic subfield of $\Q(\ze)$.

Let $D$ be a fundamental discriminant, and let $K = \Q(\sqrt{D})$, where we assume that $D \neq \ls$ (although we could presumably handle this case as well).

Write $K_z=K(\ze)$, with $[K_z : \Q] = 2(\ell - 1)$ and 
$\Gamma=\Gal(K_z/\Q) \cong C_2\times(\Z/\ell\Z)^\times$. By Kummer theory, degree $\ell$ 
abelian extensions of $K_z$ are all of the form $K_z(\alpha^{1/\ell})$ for some $\alpha \in K_z$.
Write $\tau$ and $\tau_2$ for the elements of $\Gamma$ fixing $K$ and $\Q(\ze)$ respectively, with $\tau(\ze)=\ze^g$, and $\tau_2$ nontrivial on $K$. 
We also write 
\begin{equation}
T = \{ \tau - g, \tau_2 + 1\},\;T^* = \{ \tau - 1, \tau_2  +1 \} \subseteq \F_{\ell}[\Gamma]\;.
\end{equation}

The {\itshape mirror field $K'$ of $K$} 
is the fixed field of $\tau_2\tau^{\frac{\ell-1}{2}}$; more explicitly,
\begin{equation}\label{eqn:mirror_field}
K'=\Q\big((\ze-\ze^{-1})\sqrt{D}\big) = \Q(\ze + \ze^{-1})\Big( \sqrt{ -D \big(4 - (\ze + \ze^{-1})^2\big)} \Big)\;.
\end{equation}
In particular, $K'$ is a quadratic extension of the maximal totally real subfield of $\Q(\ze)$, it is cyclic of degree $\ell-1$ over $\Q$, with Galois group
generated by the restriction of $\tau$ to $K'$, and its unique quadratic subfield is equal to $\Q(\sqrt{\ell^*})$ if $\ell \equiv 1 \pmod 4$
and to $\Q(\sqrt{D \ell^*})$ if $\ell \equiv 3 \pmod 4$.

We thus have the following diagrams of fields in the $\ell \equiv 1 \pmod 4$ and $\ell \equiv 3 \pmod 4$ cases respectively.

\[\xymatrix{
&& K_z=K(\zeta_\ell)\ar@{-}[ddl]_\tau\ar@{-}[d]^{\tau^{\frac{\ell-1}{2}}\tau_2}\ar@{-}[dr]^{\tau_2}\ar@{-}[ddll]_{\tau \tau_2}
&& K_z=K(\zeta_\ell)\ar@{-}[ddl]_\tau\ar@{-}[d]^{\tau^{\frac{\ell-1}{2}}\tau_2}\ar@{-}[dr]^{\tau_2} 
\\ 
&& K'\ar@{-}[d] & \Q(\zeta_\ell)\ar@{-}[dl]\ar@{-}[ddl]_\tau
& K'\ar@{-}[d] & \Q(\zeta_\ell)\ar@{-}[d]\ar@{-}[ddl]_\tau
\\
\Q(\sqrt{D\ell^\ast})\ar@{-}[drr] & K=\Q(\sqrt{D})\ar@{-}[dr] & \Q(\sqrt{\ell^\ast})\ar@{-}[d] 
& K=\Q(\sqrt{D})\ar@{-}[dr] & \Q(\sqrt{D \ell^\ast})  \ar@{-}[d] & \Q(\sqrt{\ell^\ast})\ar@{-}[dl]
\\ 
&& \Q 
&& \Q}\]

The mirror field of $\Q(\sqrt{D\ell^\ast})$ is fixed by $(\tau\tau_2)^{\frac{\ell-1}{2}}\tau_2$, which is equal to $
\tau^{\frac{\ell-1}{2}}\tau_2$ if $\ell\equiv 1\pmod 4$ and to $\tau^{\frac{\ell-1}{2}}$ if $\ell\equiv 3\pmod 4$. Hence if $\ell\equiv 1\pmod 4$ then the fields $K$ and 
$\Q(\sqrt{D \ell^\ast})$
share the same mirror field, and if $\ell\equiv 3\pmod 4$ they do not. If $\ell = 3$
then $K' = \Q(\sqrt{D \ell^*})$ and $\Q(\ze) = \Q(\sqrt{\ell^*})$, so the second row 
of the diagram should be identified with the third.
\\
\\
{\itshape Notation for splitting types.} We write (as is fairly common) 
that a prime $\mfp$ of a field $K$ has splitting type $(f_1^{e_1} f_2^{e_2} \dots f_g^{e_g})$ in $L/K$ if
$\mfp \Z_L = \mfP_1^{e_1} \mfP_2^{e_2} \cdots \mfP_g^{e_g}$ with $f(\mfP_i | \mfp) = f_i$ for each $i$.

\subsection{Selmer groups of number fields}\label{subsec:selmer}

In Section \ref{sec_gb} our results will be phrased in terms of the $\ell$-Selmer group, which measures the failure of the local-global principle for local $\ell^\text{th}$ powers to be global $\ell^\text{th}$ powers. We recall the relevant terminology here; see also \cite{Co}, \S 5.2.2.

\begin{definition} Let $L$ be a number field. The {\itshape group of $\ell$-virtual units} $V_\ell(L)$ consists of all $u\in L^\times$ for which $u\Z_L=\mfa^\ell$ for some fractional ideal $\mfa$ of $L$, or equivalently all $u\in L^\times$ for which $v_{\mfp}(u)$ is divisible by $\ell$ for all primes $\mfp$ of $L$. The {\itshape $\ell$-Selmer group} is the quotient $S_\ell(L)=V_\ell(L)/L^{\times\ell}$. \end{definition}

If $L=K_z$ then the $\ell$-Selmer group is a finite $\ell$-group, and it fits into a split exact sequence \begin{equation} 1\to\frac{U(K_z)}{U(K_z)^\ell}\to S_\ell(K_z)\to\Cl(K_z)[\ell]\to 1 \label{selexact} \end{equation} of $\F_\ell[\Gamma]$-modules.

\smallskip

In addition we write $\mfb = (1 - \ze)^{\ell} \Z_{K_z}$, and for each $\Gamma$-invariant ideal $\mfc$ of $\Z_{K_z}$ dividing $\mfb$
we write 
\begin{equation}
R_{\mfc} = \Cl_{\mfc}(K_z)/\Cl_{\mfc}(K_z)^{\ell},\; 
G_{\mfc} = R_{\mfc}[T]\;,
\end{equation}
where $T$ has been defined above. 
(For any $\F_{\ell}[\Gamma]$-module
$M$, $M[T]$ denotes the subgroup annihilated by all the elements of $T$.) Because
$\ell$ is totally ramified in $K_z$, any such
$\mfc$ must be of the form $(1 - \ze)^a \Z_{K_z}$ for some integer $a \leq \ell$.

\subsection{The arithmetic of $D_{\ell}$-extensions.}

Our main theorems relate counts of $D_{\ell}$- and $F_{\ell}$-extensions of given discriminant. These fields will be constructed as
subfields of their Galois closures, and our next results (and Proposition \ref{prop:fl_bij}) establish the connection between these two ways of counting
fields.

\begin{lemma}\label{lem:dl_disc}
Let $D$ be a fundamental discriminant. Then
the set of $D_{\ell}$-fields of discriminant $D^{\frac{ \ell - 1}{2}}$ is equal to
the set of degree $\ell$ subfields of unramified cyclic degree $\ell$ extensions $L/\Q(\sqrt{D})$,
and each prime dividing $\ell$ has splitting type $(1^2 1^2 \cdots 1^2 1)$ in each such $D_{\ell}$-field.

In particular, if $k=\Q(\sqrt{D})$, then up to isomorphism there are $\frac{1}{\ell - 1} |\Cl(k)/\Cl(k)^{\ell}|$ of them.
\end{lemma}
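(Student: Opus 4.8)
The plan is to pass through the Galois closure, compute discriminants with the conductor--discriminant formula, and count by class field theory. First I would use the standard dictionary for dihedral extensions: the Galois closure $\widetilde{E}$ of a $D_\ell$-field $E$ has $\Gal(\widetilde{E}/\Q)\cong D_\ell$, the unique normal subgroup $C_\ell$ has as fixed field a quadratic \emph{resolvent} $k_E$, the extension $\widetilde{E}/k_E$ is cyclic of degree $\ell$, and $E$ is the fixed field of a reflection. Conversely, if $L/k$ is a cyclic degree $\ell$ extension of a quadratic field that is dihedral over $\Q$, then the fixed fields of the $\ell$ reflections are $D_\ell$-fields with resolvent $k$, and they are conjugate under $C_\ell$, hence mutually isomorphic. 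To see that \emph{unramified} cyclic extensions qualify, note that the nontrivial $\tau'\in\Gal(k/\Q)$ acts on $\Cl(k)$ by inversion (for any ideal $\mathfrak a$, $\mathfrak a\,\tau'(\mathfrak a)=N_{k/\Q}(\mathfrak a)\Z_k$ is principal), so by Lemma~\ref{lem:tower}(2) the conjugation action on $\Gal(L/k)$ is $\sigma\mapsto\sigma^{-1}$; as $\ell>2$ this is nonabelian, i.e.\ $\Gal(L/\Q)\cong D_\ell$. Thus every degree $\ell$ subfield of an unramified cyclic degree $\ell$ extension of $\Q(\sqrt D)$ is a $D_\ell$-field.

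Next I would pin down the discriminant by the conductor--discriminant formula. Decomposing the degree $\ell$ permutation representation as $\mathrm{Ind}_{C_2}^{D_\ell}\mathbf 1=\mathbf 1\oplus\bigoplus_{j=1}^{(\ell-1)/2}\psi_j$ with $\psi_j=\mathrm{Ind}_{C_\ell}^{D_\ell}\chi_j$, and noting that every nontrivial character $\chi_j$ of $\Gal(\widetilde{E}/k_E)$ has the same conductor $\mathfrak{f}:=\mathfrak{f}(\widetilde{E}/k_E)$, the conductor of an induced representation gives $\mathfrak{f}(\psi_j)=|d_{k_E}|\,N_{k_E/\Q}(\mathfrak{f})$ and hence $|\disc E|=\bigl(|d_{k_E}|\,N_{k_E/\Q}(\mathfrak{f})\bigr)^{(\ell-1)/2}$. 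So $\disc E=D^{(\ell-1)/2}$ forces $|d_{k_E}|\,N_{k_E/\Q}(\mathfrak{f})=|D|$. The heart of the proof is to deduce from this that $\mathfrak{f}=(1)$ and $k_E=\Q(\sqrt D)$, i.e.\ that one cannot trade a smaller resolvent for a nontrivial conductor; here the hypothesis that $D$ is \emph{fundamental} is essential. For an odd prime $p\neq\ell$ dividing $\mathfrak{f}$, the $\Gal(k_E/\Q)$-stability of $\mathfrak{f}$ together with tameness forces $v_p\bigl(|d_{k_E}|\,N_{k_E/\Q}(\mathfrak{f})\bigr)\ge 2$, contradicting $v_p(D)\le 1$; the primes $2$ and $\ell$ need a short separate verification. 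This gives $\mathfrak{f}=(1)$, so $\widetilde{E}/k_E$ is unramified, and then $|d_{k_E}|=|D|$ together with the signature (below) yields $k_E=\Q(\sqrt D)$. Combined with the first paragraph, this proves the asserted equality of the two sets. I expect this discriminant step to be the main technical obstacle.

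Since $\ell$ is odd, $\widetilde{E}/k$ is automatically unramified at the infinite places, so the signature of $E$ is determined by the sign of $D$ ($E$ totally real if $D>0$, and of signature $(1,\tfrac{\ell-1}{2})$ if $D<0$), matching the sign convention built into $D^{(\ell-1)/2}$. For the splitting of $\ell$, the key observation when $\ell\mid D$ is that the prime $\mathfrak{l}$ of $k$ above $\ell$ satisfies $\mathfrak{l}^2=(\ell)$, so $[\mathfrak{l}]$ is $2$-torsion in $\Cl(k)$ and therefore trivial in the degree $\ell$ quotient cutting out $\widetilde{E}$; hence $\mathfrak{l}$ splits completely in $\widetilde{E}/k$. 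The decomposition group of $\ell$ in $\Gal(\widetilde{E}/\Q)$ is then the order $2$ inertia group generated by a reflection, and a double-coset computation in $D_\ell$ relative to $\langle\rho\rangle=\Gal(\widetilde{E}/E)$ yields exactly the splitting type $(1^2\,1^2\cdots 1^2\,1)$, with $(\ell-1)/2$ tamely ramified primes of residue degree $1$ and one unramified prime.

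Finally, by class field theory the unramified cyclic degree $\ell$ extensions $L/k$ are in bijection with the index $\ell$ subgroups of $\Cl(k)$, equivalently with the surjections $\Cl(k)\twoheadrightarrow C_\ell$ up to $\Aut(C_\ell)=\F_\ell^\times$. Since $\Hom(\Cl(k),C_\ell)=\Hom(\Cl(k)/\Cl(k)^\ell,C_\ell)$, these $L$ correspond to the hyperplanes of the $\F_\ell$-vector space $\Cl(k)/\Cl(k)^\ell$, of which there are $\tfrac{1}{\ell-1}\bigl(|\Cl(k)/\Cl(k)^\ell|-1\bigr)$. As each such $L$ gives a single isomorphism class of $D_\ell$-field (its degree $\ell$ subfields being conjugate, and distinct $L$ having distinct Galois closures), this yields the stated count.
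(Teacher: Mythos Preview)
Your argument is correct and supplies what the paper leaves as a one-line citation to \cite{Co} (Theorem~9.2.6, Proposition~10.1.26, Theorem~10.1.28). The route via conductor--discriminant and class field theory is the standard one, so in substance this is the same approach, just written out. Your acknowledged ``short separate verification'' at $p=2$ is the only delicate point, and it is worth spelling out: tame ramification of a prime $\mathfrak p\mid 2$ in the degree-$\ell$ extension $\widetilde E/k_E$ forces $\ell\mid N\mathfrak p-1\in\{1,3\}$, impossible unless $\ell=3$ with $2$ inert; in that residual case $|d_{k_E}|=|D|/4$ is odd, and your signature constraint gives $d_{k_E}=D/4\equiv 3\pmod 4$, not a fundamental discriminant. (This is the subtlety flagged in Remark~\ref{rk:is_fd}.) Two side remarks: your count $\tfrac1{\ell-1}\bigl(|\Cl(k)/\Cl(k)^\ell|-1\bigr)$ is correct and is what the paper actually uses in the proof of Theorem~\ref{thm:main}; the lemma as stated drops the $-1$. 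And your splitting-type argument only treats $\ell\mid D$, which is all that makes sense---when $\ell\nmid D$ the prime $\ell$ is unramified in $E$, so the lemma presumably intends ``each prime dividing $D$''.
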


Recall that our convention of writing discriminants in the form
$\Disc(F) = (-1)^{r_2(F)} |\Disc(F)|$ specifies the number of complex embeddings of each such field.

\begin{proof}
This can be extracted from Theorem 9.2.6, Proposition 10.1.26,  and Theorem 10.1.28 of \cite{Co}. 
\end{proof}

\begin{remark}\label{rk:is_fd}
Our lemma does not count 
$D_{\ell}$-fields of discriminant
$(4D)^{\frac{\ell - 1}{2}}$ arising from degree $\ell$ extensions of $\Q(\sqrt{D})$ which are ramified at $2$.
An example of such a field is the field generated by a root of $x^3 - x^2 - 3x + 5$ of (non-fundamental) discriminant
$- 2^2 67$.


Related considerations also occur on the $F_{\ell}$ side;
for example, the $F_5$-field generated by a root of $x^5 - 2x^4 + 4x^3 + 12x^2 - 24x + 10$,
of discriminant $(-1)^2 2^4 5^3 53^2$, in which $2$ is totally ramified, is a non-example of a field
counted by our results.
\end{remark}

\subsection{The arithmetic of $F_{\ell}$-extensions}

We now study the arithmetic of $F_{\ell}$-extensions as well as the mirror fields $K'$. The section concludes with
Theorem \ref{thm:char_mf}, which states that  if $E$ is an $F_\ell$-field of appropriate discriminant then
its Galois closure must contain $K'$.

\begin{lemma}\label{lem:tot_ram_mirror_a}
Let $D \neq 1, \pm \ell$ be a fundamental discriminant, and let
$K'$ be the mirror field of $K := \Q(\sqrt{D})$. Then we have
\[
e_{\ell}(K'/\Q) = \begin{cases}
\ell - 1 & \text{\quad if $\ell \nmid D$ or $\ell \equiv 1 \pmod 4$\;,}\\
(\ell - 1)/2 & \text{\quad if $\ell \mid D$ and $\ell \equiv 3 \pmod 4$\;,}\end{cases}
\]
\[
\Disc(K')=\begin{cases}
\ell^{\ell-2}(-D)^{(\ell-1)/2}&\text{\quad when $\ell\nmid D$\;,}\\
\ell^{\ell-2}(-D/\ell)^{(\ell-1)/2}&\text{\quad when $\ell\mid D$ and $\ell\equiv1\pmod4$\;,}\\
\ell^{\ell-3}(-D/\ell)^{(\ell-1)/2}&\text{\quad when $\ell\mid D$ and $\ell\equiv3\pmod4$\;,}\end{cases}\]
and $e_p(K'/\Q) = 2$ for each prime $p \neq \ell$ dividing $D$.
\end{lemma}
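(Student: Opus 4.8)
The plan is to treat $K'$ as an abelian field over $\Q$ and read off all three assertions from the conductor--discriminant formula applied to its group of Dirichlet characters. Since $K'=K_z^{H}$ with $H=\langle\tau_2\tau^{(\ell-1)/2}\rangle$, I first identify the character group $X$ of $\Gal(K'/\Q)$ as the annihilator of $H$ inside $\widehat{\Gamma}$. The group $\widehat{\Gamma}$ is generated by the quadratic character $\chi_D$ cutting out $K=\Q(\sqrt D)$ (with $\chi_D(\tau_2)=-1$ and $\chi_D(\tau)=1$, since $\tau$ fixes $K$) and by a character $\omega$ of order $\ell-1$ cutting out $\Q(\ze)$ (with $\omega(\tau_2)=1$ and $\omega(\tau)$ a primitive $(\ell-1)$-st root of unity, since $\tau|_{\Q(\ze)}$ has order $\ell-1$). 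A direct computation gives $\chi_D^a\omega^b(\tau_2\tau^{(\ell-1)/2})=(-1)^a(-1)^b=(-1)^{a+b}$, using $\omega(\tau)^{(\ell-1)/2}=-1$; hence $X=\{\chi_D^a\omega^b:a\equiv b\pmod 2\}$. Concretely, $X$ consists of the $(\ell-1)/2$ characters $\omega^b$ with $b$ even together with the $(\ell-1)/2$ characters $\chi_D\omega^b$ with $b$ odd.

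Next I record the relevant conductors: $\omega^b$ is a primitive character modulo $\ell$ of conductor $\ell$ whenever $b\not\equiv 0\pmod{\ell-1}$, while $\chi_D$ has conductor $|D|$. The essential case distinction arises when $\ell\mid D$: there $\chi_D$ factors as $\chi_{\ls}\,\chi_{D/\ls}$, where $\chi_{\ls}=\omega^{(\ell-1)/2}$ is the quadratic character modulo $\ell$ and $\chi_{D/\ls}$ has conductor $|D|/\ell$ prime to $\ell$ (here $D/\ls$ is a fundamental discriminant coprime to $\ell$ because $v_\ell(D)=1$). Using the standard fact that the ramification index of a prime $p$ equals the index $[X:X^{(p)}]$ of the subgroup $X^{(p)}\le X$ of characters unramified at $p$, I obtain $e_p(K'/\Q)=2$ for every $p\neq\ell$ dividing $D$ (the characters unramified at $p$ are exactly the $\omega^b$, of which there are $(\ell-1)/2$), and likewise $e_\ell(K'/\Q)=\ell-1$, \emph{except} when $\ell\mid D$ and $\ell\equiv 3\pmod 4$, where $(\ell-1)/2$ is odd and so $\chi_D\omega^{(\ell-1)/2}=\chi_{D/\ls}$ lies in $X$ and is unramified at $\ell$, forcing $e_\ell=(\ell-1)/2$. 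Multiplying $\prod_{\chi\in X}\mff(\chi)$ then yields $|\Disc(K')|=\ell^{\ell-2}|D|^{(\ell-1)/2}$ when $\ell\nmid D$, $\ell^{(\ell-3)/2}|D|^{(\ell-1)/2}$ when $\ell\mid D$ and $\ell\equiv1\pmod4$, and $\ell^{(\ell-5)/2}|D|^{(\ell-1)/2}$ when $\ell\mid D$ and $\ell\equiv3\pmod4$; rewriting $|D|^{(\ell-1)/2}$ as $\ell^{(\ell-1)/2}|D/\ell|^{(\ell-1)/2}$ in the latter two cases matches the stated $\ell^{\ell-2}$ and $\ell^{\ell-3}$.

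Finally I fix the sign, which under the paper's convention records the number of complex places. Writing $K'=\Q(\ze+\ze^{-1})\big(\sqrt{\delta}\big)$ with $\delta=D(\ze-\ze^{-1})^2$, the element $(\ze-\ze^{-1})^2$ is totally negative: each real embedding of $\Q(\ze+\ze^{-1})$ sends it to $(2i\sin(2\pi j/\ell))^2=-4\sin^2(2\pi j/\ell)<0$ for $1\le j\le(\ell-1)/2$. Hence $\delta$ is totally negative when $D>0$ and totally positive when $D<0$, so $K'$ is totally complex with $r_2=(\ell-1)/2$ in the former case and totally real with $r_2=0$ in the latter. This is exactly the signature encoded by writing the $D$-part of the discriminant as $(-D)^{(\ell-1)/2}$ or $(-D/\ell)^{(\ell-1)/2}$, which completes the proof. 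The one genuinely delicate point is the bookkeeping in the case $\ell\mid D$ with $\ell\equiv3\pmod4$, where the single character $\chi_D\omega^{(\ell-1)/2}$ loses its ramification at $\ell$ and thereby simultaneously lowers $e_\ell$ to $(\ell-1)/2$ and strips one factor of $\ell$ from the discriminant; everything else is a routine product of conductors, with the $p=2$ behavior handled automatically since $\mff(\chi_D)=|D|$ already encodes the wild $2$-adic conductor.
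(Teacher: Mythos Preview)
Your argument is correct. The character group $X$ is identified correctly, the case split at $\ell$ is handled properly (the key point being that $\chi_D\omega^{(\ell-1)/2}=\chi_{D/\ls}$ lands in $X$ precisely when $(\ell-1)/2$ is odd), the conductor products check out in all three cases, and the signature computation via $(\ze-\ze^{-1})^2$ being totally negative matches the paper's convention.

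Your route is genuinely different from the paper's. The paper works in the tower $\Q\subset K,K'\subset K_z$: for $p\neq\ell$ it uses that $K_z/K$ and $K_z/K'$ are both unramified at $p$ together with transitivity of the discriminant; for $\ell$ it argues case by case about which of the quadratic subfields $K$, $\Q(\sqrt{\ell^*})$, $\Q(\sqrt{D\ell^*})$ are ramified at $\ell$, and then converts $e_\ell(K'/\Q)$ into $v_\ell(\Disc(K'))$ via the tame formula $v_\ell=(\ell-1)(1-1/e_\ell)$, which is actually stated only later in the paper (equation \eqref{discval}). Your approach instead treats $K'$ directly as an abelian extension of $\Q$ and reads everything off the conductor--discriminant formula. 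This has two advantages: it handles all primes uniformly with no forward reference, and it makes transparent \emph{why} the anomalous case $\ell\mid D$, $\ell\equiv 3\pmod 4$ behaves differently (a single character $\chi_D\omega^{(\ell-1)/2}$ becomes unramified at $\ell$, simultaneously halving $e_\ell$ and removing one factor of $\ell$ from the discriminant). The paper's approach, on the other hand, needs less machinery---only multiplicativity of ramification indices and transitivity of discriminants---and so is closer to first principles. Both arguments coincide on the signature, using the explicit description $K'=\Q((\ze-\ze^{-1})\sqrt{D})$.
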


\begin{proof} 
Any prime $p \neq \ell$ dividing $D$ is unramified in both $K_z/K$ and $K_z/K'$, so the formula for 
$v_p(\Disc(K'))$ follows by transitivity of the discriminant.

If $\ell \nmid D$, primes above $\ell$ are totally ramified in $\Q(\ze)/\Q$, hence in $K_z/K$, hence {\itshape
not} in $K_z/K'$, hence in $K'/\Q$. If $\ell \mid D$ and $\ell \equiv 1 \pmod 4$, this 
argument with $K$ replaced by
$\Q(\sqrt{\ell^* D})$ yields the same result. 
Finally, if $\ell \mid D$ and $\ell \equiv 3 \pmod 4$, then $\Q(\sqrt{D \ell^*})$ is unramified at $\ell$
and is a subextension of $K'$, so $e_{\ell}(K'/\Q) = (\ell -1)/2$. 
In each of these cases, 
$v_\ell(\Disc(K'))$ is uniquely determined by \eqref{discval} below.

The power of $-1$ in $\Disc(K')$ follows from the formula 
$K'=\Q((\zeta_\ell-\zeta_\ell^{-1})\sqrt{D})$; since $K'$ is Galois, it is either totally real or totally complex.
\end{proof}








\begin{lemma} \label{discsuffices} Suppose that $F/\Q$ is a $C_{\ell-1}$-field, with $|\Disc(F)|$ equal to
$|D|^{\frac{\ell - 1}{2}}$ times some (positive or negative) power of $\ell$ for a fundamental discriminant $D$.
Then $F$ is equal to the mirror field of $\Q(\sqrt{D})$ or $\Q(\sqrt{\ell^\ast D})$, with discriminant
given by Lemma \ref{lem:tot_ram_mirror_a}.
\end{lemma}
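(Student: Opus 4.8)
The plan is to reconstruct $F$ entirely from its ramification, show it must be a quadratic extension of the maximal totally real subfield $\Q(\ze+\ze^{-1})$ of $\Q(\ze)$, and then match it against the two mirror fields directly. I would work throughout with the faithful Dirichlet character $\chi$ of order $\ell-1$ cutting out $F$, using the conductor--discriminant formula $\Disc(F)=\prod_{j=0}^{\ell-2}\mathfrak{f}(\chi^j)$ together with Propositions \ref{prop:kw} and \ref{prop:cd} to translate between $F$ and $\chi$. Since the prime-to-$\ell$ part of $|\Disc(F)|$ is exactly $|D|^{(\ell-1)/2}$ and $D$ is fundamental, the ramified primes $\neq\ell$ are precisely those dividing $D$.

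First I would pin down the ramification at each odd prime $p\mid D$, $p\neq\ell$, where $v_p(\Disc(F))=(\ell-1)/2$. Writing $e=e_p,f=f_p,g=g_p$ with $efg=\ell-1$ for this Galois extension, the discriminant exponent is $fg\cdot d$ for the common different exponent $d$ above $p$; if $p$ were wildly ramified then $d\ge e$, forcing $v_p(\Disc(F))\ge efg=\ell-1>(\ell-1)/2$, a contradiction. Hence $p$ is tame, $d=e-1$, and $(\ell-1)(e-1)/e=(\ell-1)/2$ gives $e_p=2$. The prime $p=2$ (when $2\mid D$) is the delicate case, since then $v_2(\Disc(F))\in\{\ell-1,\tfrac{3(\ell-1)}{2}\}$ and the crude bound $d\ge e$ no longer excludes wildness. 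Here I would instead expand $v_2(\Disc(F))=\tfrac{\ell-1}{d_2}\sum_{i=0}^{d_2-1}\mathfrak{f}_2(\chi_2^i)$ in terms of the $2$-part $\chi_2$ of $\chi$ (which has $2$-power order and is totally ramified, so $e_2=d_2$), and use the explicit conductor exponents of characters of $(\Z/2^c)^\times$: any $\chi_2^i$ of order divisible by $4$ has conductor exponent $\ge 4$, whence $e_2\ge4$ would force $v_2(\Disc(F))\ge\tfrac52(\ell-1)$, too large. This leaves $e_2\in\{1,2\}$, and matching the exponent shows $\chi_2$ is the $2$-part of the quadratic character $\chi_D$ of $\Q(\sqrt{D})$.

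With the ramification in hand, the inertia group at every ramified prime $\neq\ell$ is the unique subgroup of order $2$ of $\Gal(F/\Q)\cong C_{\ell-1}$. Its fixed field $F^+$, of degree $(\ell-1)/2$, is therefore unramified outside $\ell$; being cyclic of degree prime to $\ell$, it lies in $\Q(\ze)$ by Kronecker--Weber, so $F^+=\Q(\ze+\ze^{-1})=:L$. Thus $F$ is a quadratic extension of $L$, and writing $\eta$ for a faithful character of $(\Z/\ell)^\times$, the character $\chi^2$ is a faithful character of $\Gal(L/\Q)$, so $\chi^2=\eta^{2a}$ for some $a$ coprime to $(\ell-1)/2$. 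After replacing $\chi$ by an appropriate power (which leaves $F$ unchanged) I may arrange $\chi^2=\eta^2$, so that $\delta:=\chi\eta^{-1}$ is quadratic. Since $\eta$ is unramified outside $\ell$, the prime-to-$\ell$ part of $\delta$ equals $\chi_D$ by the previous paragraph, while its $\ell$-part is a quadratic character of conductor a power of $\ell$, hence trivial or the quadratic character $\chi_{\ell^\ast}$ of $\Q(\sqrt{\ell^\ast})$. These two possibilities give exactly two fields $F$; computing the quadratic subfield (the fixed field of $\chi^{(\ell-1)/2}$) in each case, and separating $\ell\equiv1$ from $\ell\equiv3\pmod 4$ according to the parity of $(\ell-1)/2$, identifies them as the mirror fields of $\Q(\sqrt{D})$ and of $\Q(\sqrt{\ell^\ast D})$ (these coinciding when $\ell\equiv1\pmod 4$, as already noted). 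Once $F$ is recognized as one of these mirror fields, its discriminant and signature are given by Lemma \ref{lem:tot_ram_mirror_a}.

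The main obstacle is the ramification analysis at $p=2$: unlike at the odd primes, the coarse different bound does not rule out wild ramification, so one must compute conductor exponents of $2$-adic characters explicitly to force $e_2\le 2$. A related subtlety is that when $v_2(D)=3$ (i.e.\ $D\equiv 8\pmod{16}$) both $D$ and $-D$ are fundamental discriminants of the same absolute value, so $|\Disc(F)|$ alone does not determine the sign of $D$; the statement should be read as asserting the existence of a suitable fundamental $D$, the correct sign being the one for which $\delta_2=(\chi_D)_2$.
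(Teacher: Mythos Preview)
Your argument is correct and parallels the paper's proof in spirit (both use Kronecker--Weber after determining all ramification indices), but it is organized through Dirichlet characters rather than through the subgroup $A\subseteq\Gal(\Q(\zeta_{D_1})/\Q)$ corresponding to $F$. Two points of genuine difference are worth noting. First, at $p=2$ the paper argues via the index-$2$ subfield $F'\subseteq F$: it shows $v_2(\Disc(F'))=0$ by a parity contradiction (if $2$ ramified in $F'$ then $e_2(F'/\Q)=2$, impossible since $2\mid[F':\Q]$), whence $e_2(F/\Q)=2$; your route instead bounds $v_2(\Disc(F))$ from below using explicit conductor exponents of $2$-adic characters of order $\ge 4$. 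Both are clean; the paper's is a bit more elementary, yours is more uniform with the conductor--discriminant framework you set up. Second, for the identification step the paper pins down $A$ coordinate-by-coordinate inside $\prod_p(\Z/p^{a_p})^\times$, whereas you first show $F^+=\Q(\ze+\ze^{-1})$ and then normalize $\chi$ so that $\chi=\eta\delta$ with $\delta$ quadratic, reducing the problem to determining $\delta$; these are dual bookkeeping choices, and your version makes the link to the mirror fields (via $\delta=\chi_D$ or $\chi_D\chi_{\ell^\ast}$) slightly more transparent. Your closing remark about the $v_2(D)=3$ sign ambiguity matches the paper's observation that when $8\mid\mid D$ there are two choices for $B'_2$, both realized by mirror fields.
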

In other words, if $F$ has the same discriminant and signature as a mirror field $K'$, then 
$F\cong K'$. If local exceptions are allowed at $\ell$ and infinity, then $F$ must be one
of the fields $K'$ enumerated in Lemma \ref{lem:tot_ram_mirror_a}, and knowing the discriminant and signature suffices 
to determine which.

\begin{proof} First of all, we claim that $e_p(F/\Q)$ is uniquely determined by $\Disc(F)$ for each prime $p$. 
If $p \neq 2$, then $p$ is not wildly ramified in $F$,
and $e_p(F/\Q)$ may be determined from the formula
\begin{equation}
 v_p(\Disc(F)) = (\ell - 1)\Big(1 - \frac{1}{e_p(F/\Q)}\Big). \label{discval} 
 \end{equation} 

If $p = 2$ is ramified in $F$, then $v_2(\Disc(F))$ equals
either $\ell - 1$ or $3(\ell - 1)/2$ and the ramification is wild.
There is a unique intermediate field $\Q\subseteq F' \subseteq F$ with $[F:F']=2$
containing the inertia field. We claim that
$v_2(\Disc(F')) = 0$: if not, by transitivity of the discriminant $v_2(\Disc(F')) = (\ell - 1)/4$, which would imply
that $2$ is ramified in $F'$ with $e_2(F'/\Q) = 2$ by the analogue of \eqref{discval}, which is absurd as $2 \mid [F' : \Q]$.
Therefore $v_2(\Disc(F')) = 0$ and $e_2(F/\Q) = 2$.

We also note that each other prime $p \not \in \{ 2, \ell \}$ which ramifies in $F$ satisfies $e_p(F/\Q) = 2$ and $p$ is unramified in $F'$.

The inertia groups generate $\Gal(F/\Q)$ because they generate a subgroup of $\Gal(F/\Q)$ whose fixed field is everywhere unramified.   
If $\ell\equiv 1\pmod 4$ the inertia group at $\ell$ must therefore be all of $C_{\ell-1}$.
If $\ell\equiv 3\pmod 4$ the inertia group could be the full Galois group or its index $2$ subgroup, and these two cases may be
distinguished by $v_\ell(\Disc(F))$.

\medskip
Write $D_1 = \ell |D|$ if $\ell \nmid D$ and $D_1 = |D|$ if $\ell \mid D$.
By Proposition \ref{prop:kw} we have
$F \subseteq \Q(\zeta_{D_1})$, as we see by computing local conductors:
each prime $p \neq \ell$ is unramified in $F'$, so that by Propositions
\ref{prop:lc_tower} and \ref{prop:cd} and transitivity of the discriminant
we have $\f_p(F)=v_{\mfp}(\Disc(F_{\mfP}/F'_{\mfp})) = v_p(D)$, where $\mfp$ and $\mfP$
are primes of $F'$ and $F$ above $p$ and $\mfp$ respectively.
Moreover, the prime $\ell$ is tamely
ramified in $F$ so that $\f_\ell(F) = 1$ by Proposition \ref{prop:lc_tower}.

Write $\Gal(\Q(\zeta_{D_1})/\Q)$ as $\prod_{p^{a_p}\mid \mid D_1} (\Z / p^{a_p})^{\times}$
and
$\Gal(\Q(\zeta_{D_1})/F)=A\subset\Gal(\Q(\zeta_{D_1})/\Q)$.
For each $p$, $A\cap (\Z/p^{a_p})^{\times}$ is the inertia group of primes above $p$
in $\Q(\zeta_{D_1}/F)$, so that multiplicativity of ramification degrees implies that
$[(\Z / p^{a_p})^{\times} : A\cap (\Z/p^{a_p})^{\times}]= e_p(F/\Q)$.

Write $B_p := (\Z / p^{a_p})^{\times}$ and $B'_p := A \cap B_p$ for each $p$. For $p \not \in \{ 2, \ell \}$
$B'_p$ is the unique index $2$ subgroup of $B_p$, and $B'_\ell$ is either trivial or the unique order
$2$ subgroup of $B_\ell$, as determined above by $v_\ell(\Disc(F))$. $B'_2$ is of index $2$ in $B_2$;
if $4 \mid \mid D$, then $B'_2$ is uniquely determined, and if $8 \mid \mid D$ there are two possibilities
for $B'_2$.
We claim that this information uniquely determines $A$, except in the $8 \mid \mid D$ case where both possibilities
can occur. Since the mirror fields of Lemma \ref{lem:tot_ram_mirror_a} satisfy all the same properties, this claim
establishes the lemma. 

The claim is easily checked: There is a unique subgroup $B \subseteq \prod_{p \neq \ell} B_p$ of index $2$ containing
$\prod_{p \neq \ell} B'_p$; it consists of vectors $(b_p)_{p \neq \ell}$ for which $b_p \not \in B'_p$ for an even number of $p$.
Moreover, $B_\ell$ contains a unique element $b_\ell$ of order $2$.
If $e_{\ell} = \ell - 1$, then $A$ must consist of
$\{1\} \times B$ and $\{b_\ell\} \times \big( \prod_{p \neq \ell} B_p - B \big)$.
If $e_{\ell} = \frac{\ell - 1}{2}$, then $A = \{1, b_\ell \} \times B$; to see that no other $\ell$-component
is possible, we use the fact that $\ell \equiv 3 \pmod 4$ to see that $B_\ell$ contains no elements of order $4$.
 \end{proof}

At this point we highlight the \textit{Brauer relation} (see \cite[Theorems 73 and 75]{FT}): If $E/\Q$ is a degree $\ell$ extension with Galois closure $E'$ with $\Gal(E'/\Q)\cong F_\ell$, and $F$ is the $C_{\ell-1}$ subextension of $E'$, then 
\begin{equation}\label{eqn:brauer_1}
\zeta(s)^{\ell-1}\zeta_{E'}(s)=\zeta_{E}(s)^{\ell-1}\zeta_F(s)\;,
\end{equation} 
which implies that \begin{equation}
\Disc(E') = \Disc(E)^{\ell - 1} \Disc(F)\;. \label{brauerreln}
\end{equation} (This relation also holds true for the infinite place.) This follows from a computation involving the characters of $F_{\ell}$.

This relation also implies that $\Disc(E)=\Disc(F)\N(\f(E'/F))$, where
$\f(E'/F)$ is the conductor of the abelian extension $E'/F$.

\smallskip
We can now conclude that, given suitable conditions on $\Disc(E)$, $F$ must be a mirror field. Later we will apply this
to count these $F_\ell$-fields
using class field theory.

\begin{theorem}\label{thm:char_mf} 
Suppose that $E/\Q$ is an $F_\ell$-field with 
$\Disc(E)$ equal to 
$(-D)^{\frac{\ell-1}{2}}$ times an arbitrary power of $\ell$ for a fundamental discriminant $D$.
Let $E'$ be the Galois closure of $E$, 
and let $F/\Q$ be the unique subextension of degree $\ell-1$. 

Then $E'/F$ is unramified away from the primes dividing $\ell$, and
$F$ is equal to the mirror field $K'$ of $\Q(\sqrt{D})$. \end{theorem}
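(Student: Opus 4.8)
The plan is to combine the Brauer relation~\eqref{brauerreln}, in the form $\Disc(E)=\Disc(F)\,\N(\f(E'/F))$, with the classification of $C_{\ell-1}$-fields in Lemma~\ref{discsuffices}. Writing $\langle\sigma\rangle=\Gal(E'/F)$ for the normal $C_\ell$ in $\Gal(E'/\Q)\cong F_\ell$, both assertions reduce to controlling the inertia subgroups $I_{\mfp}\subseteq F_\ell$ at primes $\mfp$ of $E'$ lying over rational primes $p\neq\ell$: the first assertion says exactly that $\langle\sigma\rangle\not\subseteq I_{\mfp}$ for all such $\mfp$, and the second will follow once the $\ell$-free part, the sign, and (when $\ell\equiv3\pmod4$) the reality of $\Disc(F)$ are pinned down well enough to invoke Lemma~\ref{discsuffices}.

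First I would show that $E'/F$ is unramified outside $\ell$. For $p\nmid D\ell$ this is automatic: by~\eqref{brauerreln} the primes dividing $\Disc(E')$ lie over those dividing $\Disc(E)=(-D)^{\frac{\ell-1}{2}}\ell^a$, so $p$ is unramified in $E'$. For a prime $p\neq\ell$ dividing $D$ the ramification is tame, so $I_{\mfp}$ is cyclic; but a Frobenius complement acts fixed-point-freely, so $F_\ell$ has no element of order $\ell d$ with $d\mid\ell-1$ and $d>1$, and hence the only cyclic subgroup containing $\langle\sigma\rangle$ is $\langle\sigma\rangle$ itself. Thus $\langle\sigma\rangle\subseteq I_{\mfp}$ would force $I_{\mfp}=\langle\sigma\rangle$, which acts on the $\ell$ points $\F_\ell$ by translations and hence transitively, making $p$ tamely totally ramified in $E$ with $v_p(\Disc(E))=\ell-1$. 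For odd $p$ this contradicts $v_p(\Disc(E))=\tfrac{\ell-1}{2}v_p(D)\le\tfrac{\ell-1}{2}$. Once this is excluded, $I_{\mfp}$ embeds in $F_\ell/\langle\sigma\rangle\cong C_{\ell-1}$ and acts freely on the $\ell-1$ nonzero points, so the orbit count gives $v_p(\Disc(E))=(\ell-1)\big(1-1/|I_{\mfp}|\big)$; equating this to $\tfrac{\ell-1}{2}$ forces $|I_{\mfp}|=2$.

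The delicate prime, and what I expect to be the main obstacle, is $p=2$, where $v_2(D)$ may equal $2$ and the numerical contradiction disappears: $\langle\sigma\rangle\subseteq I_2$ only forces $v_2(\Disc(E))=\ell-1$, i.e.\ $v_2(D)=2$ and $D=4m$ with $m$ odd. To rule this out I would work locally at $2$: if $\langle\sigma\rangle\subseteq I_2$ then $2$ is tamely totally ramified of degree $\ell$ in $E$, and this local condition is incompatible with $4m$ being a fundamental discriminant---it forces $m\equiv1\pmod4$, so that $4m$ is not fundamental. This is precisely the phenomenon behind the non-example in Remark~\ref{rk:is_fd}, whose discriminant $(-1)^2 2^4 5^3 53^2$ has $2$ totally ramified and corresponds to the non-fundamental value $4\cdot53$. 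Carrying out this local computation carefully (together with the bookkeeping for $v_2(D)=3$) is the step I would budget the most time for.

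With $E'/F$ now unramified away from $\ell$, for every $p\neq\ell$ we obtain $v_p(\Disc(F))=v_p(\Disc(E))=\tfrac{\ell-1}{2}v_p(D)$ directly from $\Disc(E)=\Disc(F)\,\N(\f(E'/F))$, so the $\ell$-free part of $|\Disc(F)|$ equals $|D|^{\frac{\ell-1}{2}}$ and Lemma~\ref{discsuffices} applies: $F$ is the mirror field of $\Q(\sqrt D)$ or of $\Q(\sqrt{\ls D})$. When $\ell\equiv1\pmod4$ these two mirror fields coincide, so $F=K'$. When $\ell\equiv3\pmod4$ they are distinct, with quadratic subfields $\Q(\sqrt{D\ls})$ and $\Q(\sqrt D)$ respectively, which have opposite reality; I would separate them by signature. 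Any complex conjugation in $\Gal(E'/\Q)\cong F_\ell$ is an involution, and an involution of $F_\ell$ fixes exactly one point of $\F_\ell$, so $E$ is either totally real or has $r_2=\tfrac{\ell-1}{2}$. The signature recorded by the hypothesis $\Disc(E)=(-D)^{\frac{\ell-1}{2}}\ell^a$ thus fixes whether the quadratic subfield of $F$ is real or imaginary, and since exactly one of the two candidate mirror fields has the matching reality, we conclude $F=K'$.
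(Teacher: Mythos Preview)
Your overall architecture—ruling out total ramification at each $p\neq\ell$, then feeding the Brauer relation into Lemma~\ref{discsuffices}, and finally separating the two candidate mirror fields by signature when $\ell\equiv3\pmod4$—is exactly the paper's, and your treatment of odd $p\neq\ell$ is correct and essentially identical to the paper's.

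The gap is at $p=2$. You assert that tame total ramification of degree $\ell$ at $2$ ``forces $m\equiv1\pmod4$'' and propose to see this by ``working locally at $2$''. But local data at $2$ only pins down $v_2(\Disc(E))$; it carries no information about $m\pmod4$, which is governed by the \emph{odd} part of $D$. The paper says this explicitly just before its proof: Remark~\ref{rk:is_fd} illustrates that the case $p=2$ \emph{cannot} be treated by purely local considerations. That the non-example there has $D/4=53\equiv1\pmod4$ is an accident of the example, not a consequence of the $2$-adic picture, and your ``bookkeeping for $v_2(D)=3$'' is a red herring (total tame ramification already forces $v_2(D)=2$).

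The paper's actual argument at $p=2$ is global. Suppose $2$ is totally ramified in $E$, so $4\|D$. Locally $E\otimes\Q_2\cong\Q_2(\sqrt[\ell]{\pi})$ for a uniformizer $\pi$, and the Galois closure of such a tame extension is obtained by adjoining $\zeta_\ell$, unramified over $\Q_2$; hence $2$ is unramified in $E'/E$ and therefore in $F/\Q$. Thus $|\Disc(F)|=(|D|/4)^{(\ell-1)/2}$ times a power of $\ell$, and Lemma~\ref{discsuffices} applies with $|D|$ replaced by $|D|/4$. Since $4\|D$ and $D$ is fundamental, $-D/4$ (not $D/4$) is the fundamental discriminant in play, and Lemma~\ref{lem:tot_ram_mirror_a} makes $F$ totally real iff $D>0$. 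On the other hand, the signature encoded in $\Disc(E)=(-D)^{(\ell-1)/2}\ell^a$, combined with $[E':F]$ odd, makes $F$ totally real iff $D<0$. This signature contradiction is what rules out $p=2$; your proposal is missing it.
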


\begin{proof} For the first claim, it suffices to prove that no prime $p \neq \ell$ can totally ramify in $E/\Q$.
This is immediate for primes $p \not \in \{ 2, \ell \}$, as $v_p(E) < \ell - 1$.
However, the case $p = 2$ is more subtle: Remark \ref{rk:is_fd} illustrates that 
it cannot be treated by purely local considerations.

So suppose to the contrary that $2$ is totally ramified in $E$, so that
$4 \mid \mid D$.
We first claim that $2$ is unramified in $E/E'$ and therefore (because $\ell$ and $\ell - 1$ are coprime) also in $F/\Q$. 
To see this, we work locally. Any totally and tamely ramified extension of $\Q_2$ is of the form $\Q_2(\alpha)$, where $\alpha$ is a root of $x^e-\pi$, where $\pi$ is a uniformizer of $\Q_2$. (See \cite{Lang94}, Proposition 12 in II, \S 5.) Such extensions do not ramify further when we pass to the Galois closure. 

For every other prime $p\neq 2,\ell$ dividing $D$, primes above $p$ are unramified in $E'/F$, so that
\eqref{eqn:brauer_1} and \eqref{discval} imply that $e_p(F/\Q)=2$. 

Therefore, $|\Disc(F)|$ equals $(|D|/4)^{\frac{\ell - 1}{2}}$ times a power of $\ell$, so that $\Disc(F)$ is determined
by Lemma \ref{discsuffices}. In particular, since $-D/4$ is a fundamental discriminant and $D/4$ is not, 
$F$ is totally real if $-(-D/4) = D/4$ is positive, and totally imaginary if $D$ is negative. However, 
the condition for $\Disc(E)$ implies that $E'$, and therefore $F$, is totally real if and only if $-D$ is positive.
We therefore have a contradiction.

\smallskip
Now we conclude from \eqref{brauerreln} that  $\Disc(E) = \ell^c \Disc(F)$ for some $c \geq 0$, so that $F$ satisfies the conditions
of Lemma \ref{discsuffices}. This implies the second claim; when $\ell \equiv 3 \pmod 4$, the possibility that $K'$ is the mirror field
of $\Q(\sqrt{\ell^* D})$ is ruled out because the signature of $E$ determines that of $F$.
\end{proof}

\section{The Kummer pairing and $F_\ell$-fields}\label{sec_gb}

In this section we introduce the Kummer pairing and use it to obtain two different expressions
for the size of the group $G_{\b}$ (introduced at the end of Section \ref{subsec:selmer}), each of which corresponds to one of the field counts in the main theorems.
Ideas for this section were contributed by Hendrik Lenstra, and we thank him for his help.

\smallskip

We begin with the following consequence of a classical result of Hecke.

\begin{proposition} Suppose that $N_z=K_z(\sqrt[\ell]{\alpha})$. Then 
we have $\f(N_z/K_z)\mid\b$ if and only 
if $\alpha$ is an $\ell$-virtual unit.
\end{proposition}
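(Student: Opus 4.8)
The plan is to localize the conductor and to isolate the prime(s) above $\ell$, treating them separately from the rest, with Hecke's theorem supplying the delicate computation at $\ell$. Since $\ell\ge 3$, the field $K_z\supseteq\Q(\ze)$ is totally complex, so no archimedean place ramifies and $\f(N_z/K_z)$ is a finite ideal. Writing $\mathfrak{l}$ for the unique prime of $K_z$ above $\ell$ (recall $\ell$ is totally ramified in $K_z$), the ideal $\b=(1-\ze)^\ell\Z_{K_z}$ is a power of $\mathfrak{l}$. Hence, factoring $\f(N_z/K_z)=\prod_{\mfp}\mfp^{\f_{\mfp}(N_z/K_z)}$, the divisibility $\f(N_z/K_z)\mid\b$ is equivalent to two conditions: (i) $N_z/K_z$ is unramified at every prime $\mfp\nmid\ell$, and (ii) the local exponent at $\mathfrak{l}$ satisfies $\f_{\mathfrak{l}}(N_z/K_z)\le v_{\mathfrak{l}}(\b)$. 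On the other side, $\alpha$ is an $\ell$-virtual unit exactly when $v_{\mfp}(\alpha)\equiv 0\pmod\ell$ for every prime $\mfp$, a condition I will split as ``$\mfp\nmid\ell$'' and ``$\mfp=\mathfrak{l}$'' so as to match (i) and (ii).

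First I would dispose of the primes away from $\ell$ by tame Kummer theory. Replacing $\alpha$ by $\alpha\g^{-\ell}$ for a suitable $\g\in K_z^\times$ changes neither $N_z$ nor the property of being an $\ell$-virtual unit, so I may normalize each $v_{\mfp}(\alpha)$ into $\{0,\dots,\ell-1\}$. For $\mfp\nmid\ell$, since $\mu_\ell\subseteq K_z$ and the residue characteristic is prime to $\ell$, the extension $K_z(\sqrt[\ell]{\alpha})/K_z$ is unramified at $\mfp$ precisely when $v_{\mfp}(\alpha)\equiv 0\pmod\ell$, and is tamely ramified (so $\f_{\mfp}=1$) otherwise. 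Thus condition (i) holds if and only if $v_{\mfp}(\alpha)\equiv0\pmod\ell$ for all $\mfp\nmid\ell$, which is exactly the ``away from $\ell$'' part of the virtual-unit condition.

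The heart of the argument, and the step I expect to be the main obstacle, is matching condition (ii) at $\mathfrak{l}$ with the single congruence $v_{\mathfrak{l}}(\alpha)\equiv0\pmod\ell$; this is where the result of Hecke enters. Hecke's theorem computes the local conductor at $\mathfrak{l}$ of a Kummer $\ell$-extension in terms of the $\mathfrak{l}$-adic proximity of the generator to an $\ell$-th power, and the crucial point is that the threshold $v_{\mathfrak{l}}(\b)=\ell\,v_{\mathfrak{l}}(1-\ze)$ is precisely the largest conductor exponent at $\mathfrak{l}$ attainable by a generator that is a unit at $\mathfrak{l}$. Concretely, I would use Hecke to show that when $v_{\mathfrak{l}}(\alpha)\equiv0\pmod\ell$ (so $\alpha$ may be taken to be an $\mathfrak{l}$-unit) one has $\f_{\mathfrak{l}}(N_z/K_z)\le v_{\mathfrak{l}}(\b)$, while if $v_{\mathfrak{l}}(\alpha)\not\equiv0\pmod\ell$ then $\mathfrak{l}$ is wildly and totally ramified in the maximal way and its conductor exponent strictly exceeds $v_{\mathfrak{l}}(\b)$. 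The delicate part is to invoke Hecke's result in the sharp form giving this exact bound (rather than mere finiteness) and to verify that the numerical threshold coincides with $v_{\mathfrak{l}}(\b)$; these local ramification computations at the prime above $\ell$ are the only substantive calculations in the proof.

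Combining the two steps, $\f(N_z/K_z)\mid\b$ holds if and only if $v_{\mfp}(\alpha)\equiv0\pmod\ell$ for every prime $\mfp$, both those above $\ell$ and those not, which is exactly the assertion that $\alpha$ is an $\ell$-virtual unit. This completes the proof.
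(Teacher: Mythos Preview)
Your proposal is correct and takes essentially the same approach as the paper, whose proof is simply a citation of Theorem~10.2.9 in \cite{Co} (Hecke's conductor formula for Kummer $\ell$-extensions), which your sketch unpacks into the tame part away from $\ell$ and the Hecke bound at $\ell$. One minor imprecision: there need not be a \emph{unique} prime of $K_z$ above $\ell$ (the prime $(1-\ze)$ of $\Q(\ze)$ can split in $K_z/\Q(\ze)$, as the paper itself notes in the proof of Proposition~\ref{prop:cond_restrict}), but since both $\b$ and $\f(N_z/K_z)$ are $\Gamma$-stable this does not affect your argument.
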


\begin{proof} See Theorem 10.2.9 of \cite{Co}.
\end{proof}

\begin{corollary}\label{cor:pairing} Let $\bmu_\ell$ denote the group of $\ell^\text{th}$ roots of 
unity. There exists a perfect, $\Gamma$-equivariant pairing of 
$\F_\ell[\Gamma]$-modules
$$R_{\b}\times S_{\ell}(K_z)\to\bmu_\ell\;.$$
\end{corollary}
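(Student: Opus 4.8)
The plan is to build the pairing from the classical Kummer pairing and then verify the three asserted properties (well-definedness, perfectness, and $\Gamma$-equivariance) in turn. First I would recall the Kummer-theoretic pairing underlying class field theory: since $K_z$ contains $\bmu_\ell$, the group $K_z^\times/(K_z^\times)^\ell$ is in Kummer duality with $\Gal(M/K_z)$, where $M$ is the maximal elementary abelian $\ell$-extension of $K_z$, via the map sending $(\rho, \alpha)$ to $\rho(\sqrt[\ell]{\alpha})/\sqrt[\ell]{\alpha}\in\bmu_\ell$. The idea is to restrict this on one side to the Selmer group $S_\ell(K_z) = V_\ell(K_z)/(K_z^\times)^\ell$, and on the other side to pass through the Artin map so that the $\Gal(M/K_z)$ side becomes a ray class group quotient. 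Concretely, I would define the pairing $R_{\b}\times S_\ell(K_z)\to\bmu_\ell$ by sending a class $[\mfa]\in R_{\b}=\Cl_{\b}(K_z)/\Cl_{\b}(K_z)^\ell$ and a virtual unit class $[\alpha]$ to $\left(\frac{\alpha}{\mfa}\right)$, an $\ell$-th power residue symbol computed via the Artin symbol of $\mfa$ acting on $\sqrt[\ell]{\alpha}$.

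**Well-definedness and the role of Hecke's theorem.**
The key input here is the Proposition just proved: for $\alpha$ an $\ell$-virtual unit, the extension $N_z=K_z(\sqrt[\ell]{\alpha})$ has conductor dividing $\b$, so $N_z$ corresponds via class field theory to a quotient of $\Cl_{\b}(K_z)$. This is exactly what makes the Artin symbol of an ideal class in $R_{\b}$ well-defined against such $\alpha$: the symbol only depends on the class of $\mfa$ in $\Cl_{\b}(K_z)$, and since $N_z/K_z$ has exponent $\ell$ it factors through $\Cl_{\b}(K_z)/\Cl_{\b}(K_z)^\ell = R_{\b}$. On the other argument, replacing $\alpha$ by $\alpha\beta^\ell$ changes $\sqrt[\ell]{\alpha}$ by a factor in $K_z^\times$ fixed by every automorphism over $K_z$, so the symbol is unchanged; thus the pairing descends to $S_\ell(K_z)$.

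**Perfectness via Hecke, and equivariance.**
For perfectness I would argue on both sides. Nondegeneracy in the $S_\ell(K_z)$ argument is essentially Kummer theory: if $[\alpha]$ pairs trivially with every ideal class, then $\Frob_{\mfa}$ fixes $\sqrt[\ell]{\alpha}$ for all $\mfa$, forcing $N_z=K_z(\sqrt[\ell]{\alpha})$ to be contained in the fixed field of the whole image of the Artin map, i.e. $\alpha\in(K_z^\times)^\ell$. For nondegeneracy in the $R_{\b}$ argument, a class $[\mfa]$ pairing trivially means $\Frob_{\mfa}$ acts trivially on every $K_z(\sqrt[\ell]{\alpha})$ with $\alpha$ a virtual unit; I expect this is where Hecke's theorem does the real work, as it guarantees that these $N_z$ account for \emph{all} of the relevant subextensions of conductor dividing $\b$, so their compositum is the full $\ell$-ray-class field of conductor $\b$, whence $\Frob_{\mfa}$ trivial forces $[\mfa]$ trivial in $R_{\b}$. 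A dimension count matching $|R_{\b}|$ with $|S_\ell(K_z)|$ through this compositum makes perfectness precise. Finally, $\Gamma$-equivariance follows formally: for $\gamma\in\Gamma$ one has $\Frob_{\gamma\mfa}=\gamma\,\Frob_{\mfa}\,\gamma^{-1}$ and $\gamma$ permutes $\ell$-th roots compatibly with its action on $\bmu_\ell$, giving the relation expressing that the pairing intertwines the $\Gamma$-actions on the two modules with the cyclotomic action on $\bmu_\ell$.

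**Main obstacle.**
I expect the hard part to be perfectness, specifically the nondegeneracy on the $R_{\b}$ side: one must know that the virtual units $\alpha$ generate, via $K_z(\sqrt[\ell]{\alpha})$, enough of the $\ell$-ray-class field of conductor $\b$ that their compositum is everything with conductor dividing $\b$. This is precisely the point where the ``if and only if'' in the preceding Proposition (i.e. Hecke's theorem) is essential, and where, as the introduction notes, the situation differs from Washington's treatment of Scholz reflection — there the pairing need not be perfect, and it is the conductor bound from Hecke that upgrades it to a perfect pairing here.
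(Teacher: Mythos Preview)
Your proposal is correct and follows essentially the same approach as the paper: define the pairing via the Artin map composed with the Kummer symbol $\sigma_{\mfa}(\sqrt[\ell]{\alpha})/\sqrt[\ell]{\alpha}$, use the preceding Proposition (Hecke) to see that $K_z(\sqrt[\ell]{\alpha})$ lies in the ray class field $M$ of conductor $\mfb$, and invoke standard Kummer theory for perfectness and equivariance. The paper is terser, simply declaring perfectness and equivariance ``classical and immediate,'' whereas you spell out where each direction of Hecke's ``if and only if'' is needed (the ``if'' for well-definedness, the ``only if'' for nondegeneracy on the $R_{\mfb}$ side); this is a useful elaboration but not a different argument.
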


\begin{proof} This is simply the Kummer pairing: let $M/K_z$ be the
abelian $\ell$-extension corresponding by class field theory to
$R_{\b}$, which is the compositum of all cyclic degree $\ell$ extensions of $K_z$ with conductors
dividing $\b$. If $\ov{\a}\in R_{\b}$, we denote as usual by 
$\sigma_{\a}\in\Gal(M/K_z)$ the image of $\a$ under the Artin map. Thus, by the above 
proposition, if $\ov{\al}\in S_{\ell}(K_z)$ and $\al$ is virtual unit representing $\ov{\al}$, we have 
$K_z(\root\ell\of\al)\subset M$, and we define the pairing by
$$(\ov{\a},\ov{\al})\mapsto \sigma_{\a}(\root\ell\of\al)/\root\ell\of\al\in\bmu_\ell\;,$$
which does not depend on any choice of representatives. It is classical and immediate that this pairing is perfect
and $\Gamma$-equivariant, e.g., that
$\langle \tau_1(\ov{\a}),\tau_1(\ov{\al}) \rangle =\tau_1(\langle \ov{\a},\ov{\al}\rangle)$ for any $\tau_1 \in \Gamma$.
\end{proof}

\begin{corollary}\label{cor:gb_pairing} We have a perfect pairing
$$G_{\b}\times S_{\ell}(K_z)[T^*]\to\bmu_\ell\;.$$
In particular, we have
$$|G_{\b}| = |S_{\ell}(K_z)[T^*]|\;.$$
\end{corollary}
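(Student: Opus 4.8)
The plan is to obtain the desired pairing by restricting the perfect $\Gamma$-equivariant pairing $R_{\b}\times S_{\ell}(K_z)\to\bmu_\ell$ of Corollary \ref{cor:pairing} to isotypic components. The crucial structural fact is that $\F_\ell[\Gamma]$ is semisimple: since $\Gamma\cong C_2\times(\Z/\ell\Z)^\times$ has order $2(\ell-1)$, which is coprime to $\ell$, and since $\F_\ell^\times$ is cyclic of order $\ell-1$ and hence already contains all the character values of $\Gamma$, the algebra $\F_\ell[\Gamma]$ splits as a product of copies of $\F_\ell$ indexed by the characters $\chi\colon\Gamma\to\F_\ell^\times$. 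Consequently every $\F_\ell[\Gamma]$-module $M$ (and both $R_{\b}$ and $S_{\ell}(K_z)$ are such) decomposes canonically as $M=\bigoplus_\chi M_\chi$, where $M_\chi=\{m : \gamma m=\chi(\gamma)m\text{ for all }\gamma\in\Gamma\}$.

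First I would identify the two subgroups appearing in the statement as single isotypic components. A character $\chi$ is determined by the pair $(\chi(\tau),\chi(\tau_2))\in\F_\ell^\times\times\{\pm1\}$, and since $g$ is a primitive root it has order $\ell-1$ in $\F_\ell^\times$. Hence the conditions $(\tau-g)m=0$ and $(\tau_2+1)m=0$ defining $R_{\b}[T]$ single out the unique character $\chi_0$ with $\chi_0(\tau)=g$ and $\chi_0(\tau_2)=-1$, so that $G_{\b}=(R_{\b})_{\chi_0}$. Likewise $T^*=\{\tau-1,\tau_2+1\}$ picks out the character $\chi_0^*$ with $\chi_0^*(\tau)=1$ and $\chi_0^*(\tau_2)=-1$, so that $S_{\ell}(K_z)[T^*]=S_{\ell}(K_z)_{\chi_0^*}$.

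Next I would exploit $\Gamma$-equivariance together with the $\Gamma$-module structure of the target. Writing $\om$ for the character giving the action of $\Gamma$ on $\bmu_\ell$ --- namely $\om(\tau)=g$ (because $\tau(\ze)=\ze^g$) and $\om(\tau_2)=1$ (because $\tau_2$ fixes $\ze$) --- the relation $\langle\gamma a,\gamma b\rangle=\gamma\langle a,b\rangle$ forces $(R_{\b})_\chi$ and $S_{\ell}(K_z)_{\chi'}$ to be orthogonal unless $\chi\chi'=\om$. Since the full pairing is perfect and is block-diagonal with respect to the isotypic decompositions, each nonzero block restricts to a perfect pairing between $(R_{\b})_\chi$ and $S_{\ell}(K_z)_{\om\chi^{-1}}$. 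The concluding step is the bookkeeping $\om\chi_0^{-1}=\chi_0^*$: indeed $(\om\chi_0^{-1})(\tau)=g\cdot g^{-1}=1$ and $(\om\chi_0^{-1})(\tau_2)=1\cdot(-1)=-1$. Taking $\chi=\chi_0$ therefore yields the asserted perfect pairing $G_{\b}\times S_{\ell}(K_z)[T^*]\to\bmu_\ell$, and perfectness gives $|G_{\b}|=|S_{\ell}(K_z)[T^*]|$.

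The computations here are all routine; the only point requiring care --- and the one I would regard as the conceptual heart --- is the interaction of the cyclotomic twist $\om$ with the annihilator conditions, i.e., the verification that dualizing $T$ against a $\bmu_\ell$-valued pairing converts $\tau-g$ into $\tau-1$ while fixing $\tau_2+1$. This is precisely the identity $\om\chi_0^{-1}=\chi_0^*$, and it is the reason the two seemingly unrelated sets $T$ and $T^*$ are forced to appear on opposite sides of the pairing.
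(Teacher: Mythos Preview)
Your proof is correct and follows essentially the same approach as the paper: both restrict the perfect $\Gamma$-equivariant pairing of Corollary~\ref{cor:pairing} to the relevant eigenspaces, using that $\Gamma$ acts on $\bmu_\ell$ via the character $\om$ with $\om(\tau)=g$, $\om(\tau_2)=1$, so that the $\chi$-eigenspace of $R_{\b}$ pairs perfectly with the $\om\chi^{-1}$-eigenspace of $S_\ell(K_z)$. The only difference is one of packaging: the paper handles $\tau$ and $\tau_2$ in two successive steps (first obtaining $R_{\b}[\tau-g^j]\times S_\ell(K_z)[\tau-g^{1-j}]\to\bmu_\ell$, then restricting further via $\tau_2+1$), whereas you invoke the full semisimple decomposition of $\F_\ell[\Gamma]$ and work with characters $\chi_0,\chi_0^*$ at once.
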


\begin{proof} Applying the $\Gamma$-equivariance of the pairing
of the preceding corollary, and recalling that $\tau(\z_\ell) = \z_\ell^g$, for any $j$ we obtain 
a perfect pairing 
$$R_{\b}[\tau-g^j]\times S_{\ell}(K_z)[\tau-g^{1-j}]\to\bmu_\ell\;.$$

Taking $j=1$ yields a perfect pairing between 
$R_{\b}[\tau-g]$ and $S_{\ell}(K_z)[\tau-1]$, and similarly, since $\tau_2$ leaves $\z_\ell$ fixed, we 
obtain a perfect pairing between $G_{\b}=R_{\b}[\tau-g,\tau_2+1]$ and
$S_{\ell}(K_z)[\tau-1,\tau_2+1]$.
\end{proof}

\begin{proposition} \label{prop:KztoK}
We have $S_{\ell}(K_z)[T^*]\isom S_{\ell}(K)$.
\end{proposition}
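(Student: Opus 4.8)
The plan is to prove the proposition by showing that the restriction map induced by the inclusion $K\hookrightarrow K_z$ is itself the desired isomorphism. First I would define $\iota\colon S_\ell(K)\to S_\ell(K_z)$ by sending the class of an $\ell$-virtual unit $\alpha\in V_\ell(K)$ to the class of $\alpha$ in $S_\ell(K_z)$. This is well defined: if $v_{\mathfrak p}(\alpha)\equiv 0\pmod\ell$ for every prime of $K$, the same holds for every prime of $K_z$, so $\alpha\in V_\ell(K_z)$; and $K^{\times\ell}\subseteq K_z^{\times\ell}$.

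Next I would verify that the image of $\iota$ lands in $S_\ell(K_z)[T^*]$. Since $\tau$ fixes $K$ pointwise we have $\tau(\alpha)=\alpha$, so $(\tau-1)$ annihilates $\iota([\alpha])$. For the generator $\tau_2+1$, recall that $\tau_2$ restricts to the nontrivial element of $\Gal(K/\Q)$, so $(\tau_2+1)\iota([\alpha])$ is represented by $\alpha\,\tau_2(\alpha)=N_{K/\Q}(\alpha)\in\Q^\times$. This norm is an $\ell$-virtual unit of $\Q$, because each rational valuation $v_p(N_{K/\Q}(\alpha))=\sum_{\mathfrak p\mid p} f_{\mathfrak p}\,v_{\mathfrak p}(\alpha)$ is divisible by $\ell$; hence $N_{K/\Q}(\alpha)=\pm m^\ell$ for some $m\in\Q^\times$, and since $\ell$ is odd we have $\pm m^\ell=(\pm m)^\ell\in\Q^{\times\ell}\subseteq K_z^{\times\ell}$. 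Thus $(\tau_2+1)\iota([\alpha])=0$, so $\iota$ maps into $S_\ell(K_z)[T^*]$. (Conceptually this is the assertion that $S_\ell(K)=S_\ell(K)^-$, reflecting the vanishing of the $\ell$-Selmer group of $\Q$ for odd $\ell$.)

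Finally I would establish injectivity and surjectivity by the usual prime-to-$\ell$ descent. For injectivity, suppose $\alpha\in V_\ell(K)$ becomes $\beta^\ell$ in $K_z$; applying $N_{K_z/K}$ and using $\alpha\in K$ gives $\alpha^{\ell-1}=N_{K_z/K}(\beta)^\ell$, and writing $1=a(\ell-1)+b\ell$ (possible as $\gcd(\ell-1,\ell)=1$) exhibits $\alpha=\bigl(N_{K_z/K}(\beta)^a\alpha^b\bigr)^\ell\in K^{\times\ell}$, so $[\alpha]=0$. For surjectivity onto $S_\ell(K_z)[T^*]$, take $\bar\beta$ in this group; the relation $(\tau-1)\bar\beta=0$ means $\tau^i(\beta)\equiv\beta\pmod{K_z^{\times\ell}}$ for all $i$, so $N_{K_z/K}(\beta)=\prod_{i=0}^{\ell-2}\tau^i(\beta)\equiv\beta^{\ell-1}\pmod{K_z^{\times\ell}}$. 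Since $N_{K_z/K}(\beta)\in V_\ell(K)$, this yields $\iota\bigl([N_{K_z/K}(\beta)]\bigr)=(\ell-1)\bar\beta=-\bar\beta$, whence $\bar\beta=\iota\bigl(-[N_{K_z/K}(\beta)]\bigr)$ lies in the image. Combining the three steps, $\iota$ is an isomorphism $S_\ell(K)\isom S_\ell(K_z)[T^*]$.

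The one genuinely non-formal point, which I would flag as the main obstacle, is the second step: checking that the image lands in the $\tau_2+1$ eigenspace, i.e.\ that $N_{K/\Q}(\alpha)$ is always a global $\ell$-th power. This is exactly where the oddness of $\ell$ (so that $-1$ is an $\ell$-th power) and the triviality of the Selmer contribution from $\Q$ enter; the injectivity and surjectivity arguments are then formal, resting only on the invertibility of $\ell-1$ modulo $\ell$.
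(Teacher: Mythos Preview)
Your proof is correct and follows the same overall strategy as the paper---show that the natural inclusion $S_\ell(K)\hookrightarrow S_\ell(K_z)$ furnishes the isomorphism---but the technical tools differ. For surjectivity onto the $(\tau-1)$-kernel, the paper invokes Hilbert~90: if $\tau(\alpha)/\alpha=\gamma^\ell$ then $N_{K_z/K}(\gamma)^\ell=1$, hence $N_{K_z/K}(\gamma)=1$ (since $\zeta_\ell\notin K$), so $\gamma=\beta/\tau(\beta)$ and $\alpha\beta^\ell\in K$ is a preimage. You instead use norm-averaging, exploiting only that $\ell-1$ is invertible mod $\ell$. Likewise, for the $(\tau_2+1)$-condition, the paper splits $S_\ell(K)=S_\ell(K)[\tau_2+1]\oplus S_\ell(K)[\tau_2-1]$ and kills the second summand by a second application of Hilbert~90 over $K/\Q$ (reducing to $S_\ell(\Q)=0$), whereas you verify directly that $N_{K/\Q}(\alpha)\in\Q^{\times\ell}$. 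Your route is slightly more elementary, avoiding Hilbert~90 altogether; the paper's route has the minor advantage of isolating $S_\ell(K_z)[\tau-1]\isom S_\ell(K)$ as an intermediate statement before dealing with $\tau_2$.
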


\begin{proof} We have an evident injection
\[
S_{\ell}(K) \longhookrightarrow S_{\ell}(K_z)[\tau-1]\;,
\]
which is also surjective: if $\al\in K_z$ satisfies 
$\tau(\al)/\al=\ga^\ell$ for some $\ga, x \in K_z$,
then $\N_{K_z/K}(\ga)^\ell= \N_{K_z/K}(\ga)=1$ (since $\z_\ell\notin K$). By 
Hilbert 90 applied to $K_z/K$ there exists $\be\in K_z$ with
$\ga=\be/\tau(\be)$, hence $\tau(\al\be^\ell)/(\al\be^\ell)=1$, so
$a=\al\be^\ell$ is a virtual unit of $K_z$, and also of $K$ because
$([K_z:K], \ell) = 1$.

Therefore $S_{\ell}(K_z)[T^*]=S_{\ell}(K_z)[\tau-1,\tau_2+1] \isom S_{\ell}(K)[\tau_2+1]$.
On the other hand we have trivially
$$S_{\ell}(K)=S_{\ell}(K)[\tau_2+1]\oplus S_{\ell}(K)[\tau_2-1]\;,$$
and we claim that $S_\ell(K)[\tau_2-1]$ is trivial: if $\al\in K$
satisfies $\tau_2(\al)=\al\ga^\ell$ for some $\ga\in K$, then applying $\tau_2$ again we
deduce that $(\ga\tau_2(\ga))^\ell=1$ and thus $\ga\tau_2(\ga)=1$, so that by 
a trivial case of Hilbert 90, $\ga=\tau_2(\be)/\be$ for some 
$\be\in K$, hence $\tau_2(\al/\be^\ell)=\al/\be^\ell$. Thus $\al/\be^\ell$ is 
a virtual unit of $\Q$ equivalent to $\al$, and since $S_\ell(\Q)$ is trivial
this proves our claim and hence the proposition.
\end{proof}

We therefore have the equality $|G_{\b}|=|S_{\ell}(K)|$, which we use to obtain the following: 

\begin{proposition}\label{prop:size_gb}
We have
\begin{equation}
|G_{\b}| = 
\begin{cases}
|\Cl(K)/\Cl(K)^{\ell}| & \text{ if $D < 0$}\;, \\
\ell |\Cl(K)/\Cl(K)^{\ell}| & \text{ if $D > 0$}\;.
\end{cases}
\end{equation}
\end{proposition}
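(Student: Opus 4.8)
The plan is to combine the identity $|G_{\b}| = |S_{\ell}(K)|$, just established via Corollary~\ref{cor:gb_pairing} and Proposition~\ref{prop:KztoK}, with a direct computation of the order of the Selmer group $S_{\ell}(K)$.

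First I would record the analogue of the exact sequence~\eqref{selexact} for the base field $K$ rather than $K_z$: for any number field $L$, sending a virtual unit $u$ with $u\Z_L=\mfa^{\ell}$ to the ideal class $[\mfa]$ yields a short exact sequence
\[
1\to \frac{U(L)}{U(L)^{\ell}} \to S_{\ell}(L) \to \Cl(L)[\ell] \to 1\;.
\]
(The map is well defined since $[\mfa]^{\ell}=[u\Z_L]=1$, its kernel consists of the classes of units, and it is surjective because any $\ell$-torsion class $[\mfa]$ satisfies $\mfa^{\ell}=(u)$ for a virtual unit $u$.) Taking $L=K$ and counting gives $|S_{\ell}(K)|=|U(K)/U(K)^{\ell}|\cdot|\Cl(K)[\ell]|$, and since $\Cl(K)$ is a finite abelian group we have $|\Cl(K)[\ell]|=|\Cl(K)/\Cl(K)^{\ell}|$. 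It therefore remains only to evaluate $|U(K)/U(K)^{\ell}|$.

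For this I would invoke Dirichlet's unit theorem. Writing $K=\Q(\sqrt{D})$ and $r=r_1+r_2-1$ for the unit rank, we have $U(K)\cong \mu(K)\times\Z^{r}$, so that $U(K)/U(K)^{\ell}\cong (\mu(K)/\mu(K)^{\ell})\times(\Z/\ell\Z)^{r}$ and hence $|U(K)/U(K)^{\ell}|=\gcd(\ell,|\mu(K)|)\cdot\ell^{r}$. The key observation is that $\ell\nmid|\mu(K)|$: for real quadratic $K$ one has $\mu(K)=\{\pm1\}$, while for imaginary quadratic $K$ one has $|\mu(K)|\in\{2,4,6\}$, and an odd prime divides this only when $\ell=3$ and $K=\Q(\sqrt{-3})$, which is excluded by the standing hypothesis $D\neq\ls$ (recall $\ls=-3$ when $\ell=3$). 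Thus the roots of unity contribute nothing and $|U(K)/U(K)^{\ell}|=\ell^{r}$. For $D<0$ the field is imaginary quadratic with $r=0$, giving $|U(K)/U(K)^{\ell}|=1$ and $|G_{\b}|=|\Cl(K)/\Cl(K)^{\ell}|$; for $D>0$ the field is real quadratic with $r=1$, giving $|U(K)/U(K)^{\ell}|=\ell$ and $|G_{\b}|=\ell\,|\Cl(K)/\Cl(K)^{\ell}|$, as claimed.

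The proof is essentially a bookkeeping exercise once the identity $|G_{\b}|=|S_{\ell}(K)|$ is in hand; the only point requiring care---rather than a genuine obstacle---is verifying that the cyclotomic factor $\mu(K)/\mu(K)^{\ell}$ is trivial, which is precisely where the excluded case $D=\ls$ (that is, $\Q(\sqrt{-3})$ for $\ell=3$) would otherwise intervene.
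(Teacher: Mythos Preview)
Your proof is correct and follows essentially the same route as the paper: both use $|G_{\b}|=|S_{\ell}(K)|$, the Selmer exact sequence $1\to U(K)/U(K)^{\ell}\to S_{\ell}(K)\to\Cl(K)[\ell]\to1$, and the computation $\dim_{\F_\ell}(U(K)/U(K)^{\ell})=1-r_2$. The paper simply cites the exact sequence~\eqref{selexact} (adapted to $K$) and asserts the unit-group dimension, whereas you spell out the Dirichlet-unit-theorem computation and the $\mu(K)$ check explicitly; your added remark about the excluded case $D=\ls$ is a nice clarification of exactly where that hypothesis enters.
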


\begin{proof} 
By the exact sequence (\ref{selexact}) and Proposition 2.12 of \cite{CoDiOl3}, the proofs of which adapt to $K$
without change, and since $\dim_{\F_\ell}(U(K)/U(K)^\ell)=1-r_2(D)$, 
where (as usual) $r_2 = 1$ if $D < 0$ and $r_2 = 0$ if $D > 0$,
we obtain
$$|S_{\ell}(K)|=\ell^{1-r_2(D)}|\Cl(K)/\Cl(K)^\ell|\;,$$
yielding the proposition.
\end{proof}

Note that the last statement generalizes Proposition 7.7 of \cite{CM}.

\smallskip

By Lemma \ref{lem:dl_disc} it thus follows that $D_{\ell}$-fields can be counted in terms of $G_{\mfb}$. 
We now show that the same is true of $F_\ell$-fields. We begin by showing that $G_{\b}$ can be `descended' to
$K'$,  generalizing Proposition 3.4 of \cite{CT3}:

\begin{proposition}\label{prop:iso_red}
Let $\mfc = (1 - \ze)^a \Z_{K_z}$ be any $\Gamma$-invariant ideal dividing $\mfb$.
\begin{enumerate}[(1)]
\item
There is an isomorphism
\[
\frac{ \Cl_{\mfc}(K_z)}{\Cl_{\mfc}(K_z)^{\ell}}[T] \rightarrow \frac{ \Cl_{\mfc'}(K')}{ \Cl_{\mfc'}(K')^{\ell} }[\tau - g]\;,
\]
where $K'$ is the mirror field of $K = \Q(\sqrt{D})$
and $\mfc' = \mfc \cap K'$. 
\item
We have
\begin{enumerate}\item $\c' = \p^a$ if either $\ell$ is unramified
in $K$ or $\ell\equiv1\pmod4$, where $\p$ is the unique prime of $K'$ above
$\ell$;
\item $\c' = \p^{\lceil \frac{a}{2} \rceil}$ if $\ell$ is ramified in $K$ and
$\ell\equiv3\pmod4$, where $\q=\p$ or $\q=\p\p'$ depending on whether
there is a unique prime $\p$ or two distinct primes $\p$ and $\p'$
of $K'$ above $\ell.$
\end{enumerate}
\end{enumerate}
\end{proposition}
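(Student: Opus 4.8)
The plan is to treat the two parts separately, proving part (2) first (or in parallel) since the shape of $\mfc'$ pins down the moduli used in part (1). Part (1) is a Galois-descent statement along the quadratic extension $K_z/K'$, whose degree $2$ is invertible modulo $\ell$, while part (2) is a purely local contraction computation that feeds Lemma \ref{lem:tot_ram_mirror_a} into the standard formula $\GP^a\cap\Z_{K'}=\p^{\lceil a/e(\GP/\p)\rceil}$.

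For part (1), recall that $H=\Gal(K_z/K')=\langle h\rangle$ is generated by $h=\tau_2\tau^{\frac{\ell-1}{2}}$, of order $2$. The first key observation is that $h$ acts trivially on $R_\mfc[T]$: on this module $\tau$ acts as the scalar $g$ and $\tau_2$ as $-1$, and since $g$ is a primitive root we have $g^{\frac{\ell-1}{2}}\equiv-1\pmod\ell$, so $h$ acts as $(-1)\cdot(-1)=1$. I would then bring in the norm map $N=N_{K_z/K'}$ and the ideal-extension map $j$ between the ray class groups $\Cl_\mfc(K_z)$ and $\Cl_{\mfc'}(K')$, well-defined by the choice $\mfc'=\mfc\cap K'$ (functoriality of the Artin map, cf.\ \cite{Co}). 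Both maps are $\Gal(K'/\Q)$-equivariant, hence carry $[\tau-g]$-torsion to $[\tau-g]$-torsion; moreover $j(x)$ is $H$-fixed, and combining this with $\tau j(x)=g\,j(x)$ forces $\tau_2 j(x)=-j(x)$, so $j$ lands inside $R_\mfc[T]$. Since $N\circ j=[K_z:K']=2$ and $j\circ N=\sum_{h\in H}h$ acts as $|H|=2$ on the $H$-fixed module $R_\mfc[T]$, and $2$ is invertible modulo $\ell$, the maps $N$ and $j$ restrict to mutually inverse isomorphisms between $R_\mfc[T]$ and $(\Cl_{\mfc'}(K')/\Cl_{\mfc'}(K')^\ell)[\tau-g]$. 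This is the promised generalization of Proposition 3.4 of \cite{CT3}; note that the condition $\tau_2+1$ upstairs is exactly what licenses the descent and then disappears downstairs, where $\tau_2\notin\Gal(K'/\Q)$.

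For part (2), everything reduces to computing $e(\GP/\p)$ for a prime $\GP\mid\ell$ of $K_z$ over $\p\mid\ell$ of $K'$. First, the prime $\lambda=(1-\ze)$ of $\Q(\ze)$ is unramified in $K_z/\Q(\ze)$: indeed $K_z=\Q(\ze)(\sqrt D)$, and $v_\lambda(D)=(\ell-1)v_\ell(D)$ is even (as $D$ is a fundamental discriminant, $v_\ell(D)\in\{0,1\}$), so the quadratic extension is unramified at $\lambda$. Hence $e_\ell(K_z/\Q)=e(\lambda/\ell)=\ell-1$ and $v_\GP(1-\ze)=1$. Dividing by $e_\ell(K'/\Q)$ as given in Lemma \ref{lem:tot_ram_mirror_a} yields $e(\GP/\p)=1$ when $\ell\nmid D$ or $\ell\equiv1\pmod4$, and $e(\GP/\p)=2$ when $\ell\mid D$ and $\ell\equiv3\pmod4$. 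The contraction formula then gives $\mfc'=\p^a$ in the first case (with $\p$ the unique, totally ramified prime of $K'$ above $\ell$) and $\mfc'=\q^{\lceil a/2\rceil}$ in the second, $\q$ being the product of the one or two primes of $K'$ above $\ell$, exactly as stated.

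The step I expect to be the main obstacle is the well-definedness of $N$ and $j$ in case (2)(b), where $K_z/K'$ is ramified at $\ell$. There $\mfc'\Z_{K_z}=\q^{\lceil a/2\rceil}\Z_{K_z}$ equals $\mfc$ only when $a$ is even; for odd $a$ (including the principal case $a=\ell$ used for $\mfb$) the two moduli differ by a single factor of $\GP$. Verifying that the norm nonetheless sends ray-principal ideals to ray-principal ideals, i.e.\ that $\alpha\equiv1\pmod{\mfc}$ forces $N_{K_z/K'}(\alpha)\equiv1\pmod{\mfc'}$, requires a local computation involving the different of the ramified extension $K_z/K'$, and symmetrically for $j$. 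I expect this to succeed precisely because $\mfc'$ is defined as $\mfc\cap K'$, but it is the one genuinely delicate point; the remainder is bookkeeping. Infinite places may be ignored throughout, since $\ell$ is odd and so they contribute nothing to the $\ell$-torsion of the ray class groups.
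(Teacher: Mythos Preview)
Your argument is correct and essentially matches the paper's: both hinge on the observation that $h=\tau_2\tau^{(\ell-1)/2}$ acts trivially on $R_\mfc[T]$ and then descend along the quadratic extension $K_z/K'$. The paper packages this via the idempotent $e=\frac{1+h}{2}\in\F_\ell[\Gamma]$, representing each class in $G_\mfc$ by an ideal $\mfa\cdot h(\mfa)=\mfa'\Z_{K_z}$ extended from $K'$, whereas you use the norm and extension maps with $N\circ j=2$ and $j\circ N=1+h=2$ on $R_\mfc[T]$. These are equivalent formulations of the same descent, and part (2) is handled identically in both.

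Your flagged obstacle is easily dispatched, and no different computation is needed. For $N$: since $\mfc$ is $\Gamma$-invariant, $\alpha\equiv1\pmod\mfc$ gives $N_{K_z/K'}(\alpha)\equiv1\pmod\mfc$, and as $N(\alpha)\in K'$ this yields $N(\alpha)-1\in\mfc\cap K'=\mfc'$. For $j$: in case (2)(b) one has $\mfc'\Z_{K_z}=((1-\ze)\Z_{K_z})^{2\lceil a/2\rceil}\subseteq\mfc$, so $\beta\equiv1\pmod{\mfc'}$ with $\beta\in K'$ immediately gives $\beta\equiv1\pmod\mfc$. The paper likewise leaves these checks implicit under ``as we check.''
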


\begin{proof}
Since $\tau_2$ and $\tau^{(\ell-1)/2}$ each act by $-1$ on $G_{\mfc}$, $\tau^{(\ell-1)/2}\tau_2$ acts trivially.
Writing $e = \frac{1+\tau_2\tau^{(\ell-1)/2}}{2}$, decomposing $1 = e + (1 - e) = \frac{1+\tau_2\tau^{(\ell-1)/2}}{2} + \frac{1-\tau_2\tau^{(\ell-1)/2}}{2}$
in $\F_\ell[\Gamma]$, and noting that 
$G_{\mfc}$ is annihilated by $1-e$, we see that
the elements of $G_{\mfc}$ are exactly those elements of $\frac{\Cl_{\mfc}(K_z)}{\Cl_{\mfc}(K_z)^\ell}$ that can be represented by an ideal of the form $\mfa\tau_2\tau^{(\ell-1)/2}(\mfa)$, which we check is of the form $\mfa'\Z_{K_z}$ for some ideal $\mfa'$ of $K'$. 

As we check, we obtain a well-defined, injective map
$G_{\mfc}\to\frac{\Cl_{\mfc'}(K')}{\Cl_{\mfc'}(K')^\ell}[ \tau - g]$. To see that it is surjective, observe that
any class in $\frac{\Cl_{\mfc'}(K')}{\Cl_{\mfc'}(K')^\ell}[\tau-g]$ is represented by $I \sim I^{1 + \ell}$ for some ideal $I$ of $\Z_{K'}$, 
and with $\mfa=I^{(1+\ell)/2}$ we have
$I^{1+\ell}\ZZ_{K_z} = \mfa\tau_2\tau^{(\ell-1)/2}(\mfa)$.

For (2a), recall that $\ell$ is totally ramified in $K'$ by Lemma \ref{lem:tot_ram_mirror_a}, so that we must show
that $\mfc\cap K'=\mfp^a$. As $\ell$ is unramified in $\Q(\sqrt{D})$, we have $e_{\ell}(K_z/\Q) = \ell - 1$, and if
$\mfP$ is a prime of $K_z$ above $\mfp$ then $v_{\mfp}(x) = v_{\mfP}(x)$ for any $x \in K'$, hence the result.

For (2b), Lemma \ref{lem:tot_ram_mirror_a} implies that $\ell$ has ramification index $\frac{\ell - 1}{2}$ in $K'$,
and hence that each prime of $K'$ above $\ell$ has ramification index $2$ in $K_z/K'$.
That is, $2 v_{\mfp}(x) = v_{\mfP}(x)$, and the result
follows.
\end{proof}

We can now obtain the desired bijection for $F_\ell$-fields, 
adapting Proposition 4.1 in \cite{CT3}. 

\begin{proposition}\label{prop:fl_bij} 
For each $\Gamma$-invariant ideal $\mfc \mid \mfb$, there exists a bijection between the following two sets:
\begin{itemize}
\item
Subgroups of index $\ell$ of $G_{\mfc} = \frac{ \Cl_{\mfc}(K_z)}{\Cl_{\mfc}(K_z)^{\ell}}[T]$. 
\item
Degree $\ell$ extensions $E/\Q$ (up to isomorphism), whose Galois closure $E'$ has Galois group $F_{\ell}$
and contains $K'$, with the conductor $\mff(E'/K')$ dividing $\mfc' = \mfc \cap K'$, such that
$\tau \sigma \tau^{-1} = \sigma^g$ for any generator $\sigma$ of $\Gal(E'/K')$. 
\end{itemize}
\end{proposition}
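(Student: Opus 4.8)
The plan is to prove the bijection in two stages: first descend everything from $K_z$ to the mirror field $K'$ via Proposition \ref{prop:iso_red}, and then match index-$\ell$ subgroups of the resulting eigenspace with cyclic degree $\ell$ extensions of $K'$ through class field theory, finally passing from these Galois extensions of $\Q$ down to the degree $\ell$ fields $E$ they contain. By Proposition \ref{prop:iso_red} we have an isomorphism $G_{\mfc}\isom\frac{\Cl_{\mfc'}(K')}{\Cl_{\mfc'}(K')^\ell}[\tau-g]$, so it suffices to biject index-$\ell$ subgroups of this eigenspace with the fields in question. Since $\Gal(K'/\Q)=\langle\tau\rangle\isom C_{\ell-1}$ has order prime to $\ell$ and $\F_\ell^\times$ contains all $(\ell-1)$-st roots of unity, the $\F_\ell[\Gal(K'/\Q)]$-module $\frac{\Cl_{\mfc'}(K')}{\Cl_{\mfc'}(K')^\ell}$ splits as a direct sum of $\tau$-eigenspaces, of which $G_{\mfc}$ is the $[\tau-g]$-component.

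Given an index-$\ell$ subgroup $H$ of $G_{\mfc}$, I would enlarge it to the index-$\ell$ subgroup $\widetilde H=H\oplus V'$ of the full quotient, where $V'$ is the sum of the remaining eigenspaces. Because $\tau$ acts on $G_{\mfc}$ by the scalar $g$, the subgroup $H$, and hence $\widetilde H$, is automatically $\tau$-stable. Pulling $\widetilde H$ back to a congruence subgroup of $\Cl_{\mfc'}(K')$ and applying class field theory produces a cyclic degree $\ell$ extension $E'/K'$ with conductor dividing $\mfc'$; the $\tau$-stability forces $E'/\Q$ to be Galois, and Lemma \ref{lem:tower}(2) identifies the conjugation action as $\tau\sigma\tau^{-1}=\sigma^g$ for any generator $\sigma$ of $\Gal(E'/K')$. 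Since $\gcd(\ell,\ell-1)=1$, the extension $1\to C_\ell\to\Gal(E'/\Q)\to C_{\ell-1}\to 1$ splits, and the displayed action exhibits $\Gal(E'/\Q)$ as the group $F_\ell$ of \eqref{def:fl}. I then take $E$ to be the fixed field of any complement $C_{\ell-1}\le F_\ell$; these $\ell$ complements are mutually conjugate and core-free, so $E$ is a well-defined-up-to-isomorphism degree $\ell$ field whose Galois closure is exactly $E'$.

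For the inverse, starting from a field $E$ with the stated properties I would pass to its Galois closure $E'$, which contains $K'$ as the fixed field of the unique index-$(\ell-1)$ subgroup $\langle\sigma\rangle\trianglelefteq F_\ell$, so that $E\mapsto E'$ is well defined on isomorphism classes; it is moreover injective, since $E'/\Q$ is Galois and hence any $\Q$-isomorphism $E_1\to E_2$ extends to an automorphism of $\overline{\Q}$ carrying $E_1'$ to $E_2'$, forcing $E_1'=E_2'$. The cyclic extension $E'/K'$ corresponds by class field theory to an index-$\ell$ congruence subgroup; the conductor bound $\mff(E'/K')\mid\mfc'$ places it inside $\Cl_{\mfc'}(K')$, and the hypothesis $\tau\sigma\tau^{-1}=\sigma^g$ together with Lemma \ref{lem:tower}(2) forces the associated order-$\ell$ quotient to lie in the $[\tau-g]$-eigenspace, i.e.\ to factor through $G_{\mfc}$, yielding an index-$\ell$ subgroup. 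That the two constructions are mutually inverse is then a formal verification.

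The main obstacle I anticipate is the Galois-theoretic bookkeeping in passing between the Galois extension $E'/\Q$ and the degree $\ell$ field $E$ up to isomorphism: one must confirm that the $\ell$ complements to $\langle\sigma\rangle$ in $F_\ell$ form a single conjugacy class of core-free subgroups (so that $E$ is well defined up to isomorphism and genuinely has $E'$ as its Galois closure), and that isomorphic $E$ necessarily share the same $E'$. The class field theory input---extending an index-$\ell$ subgroup of one eigenspace to a $\tau$-stable congruence subgroup of the full ray class group, and reading off the conductor so that $E'/\Q$ is Galois with the prescribed conjugation action---is the other delicate point, although Proposition \ref{prop:iso_red} and Lemma \ref{lem:tower} do most of the heavy lifting there.
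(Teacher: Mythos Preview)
Your proposal is correct and follows essentially the same route as the paper: reduce to $K'$ via Proposition~\ref{prop:iso_red}, decompose $\Cl_{\mfc'}(K')/\Cl_{\mfc'}(K')^\ell$ into $\tau$-eigenspaces, enlarge an index-$\ell$ subgroup of the $[\tau-g]$-piece by adding the other eigenspaces, and invoke class field theory and Lemma~\ref{lem:tower}(2) to produce the $F_\ell$-extension with the prescribed conjugation action. Your treatment of the passage between $E'$ and the isomorphism class of $E$ (conjugacy and core-freeness of complements, injectivity of $E\mapsto E'$) is in fact more explicit than the paper's, which simply remarks that the degree-$\ell$ subextensions of $E'$ are all isomorphic and that the steps are reversible.
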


\begin{remark}
Recall that the element $\tau \sigma \tau^{-1} \in \Gal(E'/K')$ is well defined by Lemma \ref{lem:tower}. Also, note that $\mff(E'/K')$ is $\Gamma$-invariant, because $E'$ is fixed by $\tau_2$, courtesy of Proposition~\ref{prop:iso_red}.
\end{remark}

\begin{proof}
%
By Proposition \ref{prop:iso_red}, it suffices to exhibit a bijection between the set of field extensions as above,
and
subgroups of index $\ell$ of $G'_{\c'} := \frac{ \Cl_{\mfc'}(K')}{ \Cl_{\mfc'}(K')^{\ell} }[\tau - g],$ where $\mfc' = \mfc \cap K$.

Given such a subgroup, we produce a degree-$\ell$ extension of the desired type. 
Write $A':=\Cl_{\mfc'}(K')/\Cl_{\mfc'}(K')^\ell$, and decomposing $A'$ into eigenspaces for 
the action of $\tau$ (as we can, because the order of $\tau$ is coprime to $\ell$)
write $A'\cong G'_{\c'} \times A''$ where $A''$ is the sum of the other eigenspaces.

Subgroups $B \subseteq G'_{\c'}$ of index $\ell$ are in bijection with subgroups
$B'=B\times A'' \subseteq A'$ of index $\ell$ containing $A''$. For each $B'$, 
class field theory gives a unique extension $E'/K'$, cyclic of degree $\ell$, of conductor
dividing $\mfc'$,
for which the Artin map induces an isomorphism $G_{\mfc'}'/B'\cong\Gal(E'/K')$. Furthermore, $E'$ is Galois over $\Q$ because $G_{\mfc'}$ and $B'$ are $\tau$-stable.
Each $B$ yields a distinct $E'$, and as the action of $\Gal(K'/\Q)$ on the class group matches the conjugation
action of $\Gal(K'/\Q)$ on $\Gal(E'/K')$ we have $\Gal(E'/\Q) \isom F_{\ell}$ with presentation as in the second bullet
point. The extension $E$ may be taken to be any of the isomorphic
degree $\ell$ subextensions of $E'$.

Finally we note that all the steps are reversible, establishing the desired bijection.
\end{proof}

\begin{remark}\label{rem:explain_minus}
We now justify the remark made after the statement of our main results concerning the notation * and the primitive roots $\pm g$.

Suppose that $\ell \equiv 	1 \pmod 4$, that $\ell \nmid D$, and that $\tau$ is a generator of $\Gal(K_z/K)$, so that
$\tau \tau_2$ is a generator of $\Gal(K_z/\Q(\sqrt{D \ell^*}))$. Then both $K$ and 
$\Q(\sqrt{D \ell^*}) = \Q(\sqrt{D \ell})$ have the same mirror field.

Replacing $K$ with $\Q(\sqrt{D \ell})$ is equivalent to replacing $\tau$ with $\tau \tau_2$
and thus $T = \{ \tau - g, \tau_2 + 1 \}$ with $\{ \tau \tau_2 - g, \tau_2 + 1 \}$, or equivalently, $\{\tau + g, \tau_2 + 1 \}$.
Thus, if we study $D_\ell$-extensions with resolvent $\Q(\sqrt{D \ell})$, where $\tau$ is still regarded
as a generator of $\Gal(K_z/\Q(\sqrt{D}))$, we obtain the same results with $g$ replaced with $-g$.
In particular, in the previous lemma we obtain field extensions $E$ with
$\tau \sigma \tau^{-1} = \sigma^{-g}$.

\end{remark}

We now show that the set of conductors $\mff(E'/K')$ that can occur in Proposition
\ref{prop:fl_bij} is quite limited.

\begin{proposition}\label{prop:cond_restrict}
The conductors $\mff(E'/K')$ of fields counted in Proposition \ref{prop:fl_bij} are restricted to the following values:
\begin{itemize}
\item If $\ell \nmid D$, $v_\ell(\mff(E'/K')) \in \{ 0, 2 \}$.
\item If $\ell \mid D$ and $\ell \equiv 1 \pmod 4$, $v_\ell(\mff(E'/K')) \in \{ 0, \frac{\ell + 3}{2} \}$.
\item If $\ell \mid D$ and $\ell \equiv 3 \pmod 4$, $v_\ell(\mff(E'/K')) \in \{ 0, 2, \frac{\ell + 5}{2} \}$.
\end{itemize}
\end{proposition}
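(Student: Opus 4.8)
The plan is to reduce everything to a local computation at the primes above $\ell$ and to read off the admissible conductor exponents from the arithmetic of the virtual unit attached to $E'$ by the Kummer pairing. First I would dispose of the primes away from $\ell$: by Theorem~\ref{thm:char_mf} the extension $E'/K'$ is unramified outside $\ell$, so $\mff(E'/K')$ is supported on the prime(s) $\mfp\mid\ell$ of $K'$, and it suffices to bound the local exponent $v_\mfp(\mff(E'/K'))$. Since the residue characteristic is $\ell$ and $[E':K']=\ell$, a ramified $E'_{\mfP}/K'_\mfp$ is necessarily \emph{wildly} ramified; hence $v_\mfp(\mff(E'/K'))=0$ exactly when $\mfp$ is unramified (or split) in $E'/K'$, and otherwise $v_\mfp(\mff(E'/K'))\ge 2$. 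In particular the value $1$ never occurs.

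To compute the wild exponents I would pass through the Kummer pairing. By Corollary~\ref{cor:gb_pairing} and Proposition~\ref{prop:KztoK} the extension $E'$ corresponds to an $\ell$-virtual unit $\alpha\in S_\ell(K)$, and the associated cyclic degree-$\ell$ extension of $K_z$ is $N_z=K_z(\sqrt[\ell]{\alpha})$. When $\ell$ is unramified in $K$ or $\ell\equiv 1\pmod 4$, the field $K'$ is totally ramified at $\ell$ with $e_\ell(K'/\Q)=\ell-1=e_\ell(K_z/\Q)$, so $K_z/K'$ is unramified at $\mfp$ and Proposition~\ref{prop:lc_tower}(2) gives $v_\mfp(\mff(E'/K'))=\mff_{\mfP}(N_z/K_z)$. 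It therefore remains to compute the local conductor of the Kummer extension $K_{z,\mfP}(\sqrt[\ell]{\alpha})/K_{z,\mfP}$. Writing $e_0=v_{\mfP}(\ze-1)$ (here $e_0=1$) and letting $m$ be the largest integer with $\alpha\in U^{(m)}_{K_{z,\mfP}}(K_{z,\mfP}^\times)^\ell$, the standard formula for wild Kummer conductors gives $\mff_{\mfP}(N_z/K_z)=\ell e_0-m+1$ (and $0$ when $\alpha$ is locally an $\ell$-th power). So the problem reduces to determining the admissible values of the filtration level $m$.

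The level $m$ is controlled by two inputs. The first is the local shape of $K$ at $\ell$: if $\ell\nmid D$ then $K_\q$ is unramified over $\Q_\ell$, the group $U^{(1)}_{K_\q}/(U^{(1)}_{K_\q})^\ell$ is concentrated in a single filtration step, and a unit representative lands at $m=\ell-1$, yielding conductor $2$. If $\ell\mid D$ then $K_\q/\Q_\ell$ is ramified quadratic and two steps $i\in\{1,2\}$ are a priori available, giving $m=(\ell-1)/2$ or $m=\ell-1$ and hence conductor $(\ell+3)/2$ or $2$. The second input, which selects among these, is a norm condition: the proof of Proposition~\ref{prop:KztoK} shows $S_\ell(K)=S_\ell(K)[\tau_2+1]$ and $S_\ell(\Q)=1$, so every virtual unit satisfies $N_{K/\Q}(\alpha)\in(\Q^\times)^\ell$, hence locally $N_{K_\q/\Q_\ell}(\alpha)\in(\Q_\ell^\times)^\ell$. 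A short computation of the norm on the unit filtration of the ramified quadratic $K_\q/\Q_\ell$ shows that this congruence is incompatible with the deeper step $i=2$; thus only $m=(\ell-1)/2$ survives, and the sole nonzero conductor when $\ell\equiv 1\pmod 4$ is $(\ell+3)/2$, as claimed.

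The main obstacle is the remaining case $\ell\mid D$, $\ell\equiv 3\pmod 4$, where $\ell$ is \emph{not} totally ramified in $K'$: one has $e_\ell(K'/\Q)=(\ell-1)/2$, the quadratic subfield of $K'$ is unramified at $\ell$, and $\ell$ may have either one prime (with residue degree $2$) or two primes in $K'$. Consequently $K_z/K'$ is only tamely (quadratically) ramified at $\ell$, so the clean identification $v_\mfp(\mff(E'/K'))=\mff_{\mfP}(N_z/K_z)$ must be replaced by a conductor comparison across this tame quadratic step, and the splitting behaviour of $\ell$ in $K'$ has to be tracked. It is precisely this extra structure that produces the additional admissible value $2$ alongside $(\ell+5)/2$. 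I would handle it by carrying out the local analysis of the tower $K_{z,\mfP}/K'_\mfp/\Q_\ell$ in each splitting sub-case, cross-checking the outcome against the discriminant via the Brauer relation~\eqref{brauerreln}, the formula~\eqref{discval}, and Lemma~\ref{lem:tot_ram_mirror_a}: these determine $v_\ell(\Disc(E'))$ independently and confirm that no conductor exponent other than $0$, $2$, and $(\ell+5)/2$ can occur.
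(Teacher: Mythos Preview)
Your route is genuinely different from the paper's. The paper never passes to the Selmer side at all: it stays with the ray class group of $K_z$, observes that conductor $(1-\ze)^a$ can occur exactly when the graded piece $(1+P^a)/(1+P^{a+1})$ has nonzero $T$-eigenspace, linearizes this via the Artin--Hasse exponential to $(P^a/P^{a+1})[T]\neq 0$, and then simply reads off the admissible $a$ from the known $\Gamma$-module structure of these one-dimensional pieces (citing \cite{CoDiOl1}). The translation back to $K'$ is then purely formal via Proposition~\ref{prop:iso_red}(2). This is short because the Galois action on $P^a/P^{a+1}$ is transparent; no tower climbing is needed.

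Your approach is the Kummer-dual of this: you locate the generator $\alpha$ inside $S_\ell(K)$ and compute the conductor from its filtration level in $K_{z,\mfP}$. This is sound and arguably more elementary (no appeal to \cite{CoDiOl1}), but two points deserve tightening. First, the assertion ``a unit representative lands at $m=\ell-1$'' (and its analogue $m=(\ell-1)/2$) implicitly uses that the level of $\alpha$ in $K_{z,\mfP}$ modulo $(K_{z,\mfP}^\times)^\ell$ equals $e(K_{z,\mfP}/K_\q)$ times its level in $K_\q$; this is true here because $(U^{(1)}_{K_{z,\mfP}})^\ell\subset U^{(\ell e_0)}_{K_{z,\mfP}}$ and the relevant levels lie strictly below $\ell e_0$, but you should say so. Second, in the case $\ell\mid D$, $\ell\equiv 3\pmod 4$ you cannot use Proposition~\ref{prop:lc_tower}(2) as stated: $K_z/K'$ is (tamely) \emph{ramified} of degree $2$ at $\ell$, so the conductor does not simply transport along the tower. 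Your proposed fix via a direct local analysis of $K_{z,\mfP}/K'_\mfp$ together with the Brauer relation is workable, but note that the paper avoids this case split entirely---the halving in Proposition~\ref{prop:iso_red}(2b) already encodes the ramified step, and the extra admissible value $a=2$ drops out because the congruence on $a$ is only modulo $(\ell-1)/2$ rather than $\ell-1$. That structural explanation is what your case-by-case plan would eventually reproduce.
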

\begin{proof}
We work with the extensions $E''/K_z$ which correspond to the extensions $E'/K'$ by Proposition \ref{prop:iso_red}.
Unraveling the definition of $G_{\c}$, we see that the conductor of such an extension can be equal to
$(1 - \ze)^a \Z_{K_z}$ if and only if
\[
\frac{1 + P^a}{1 + P^{a + 1}} [T] \neq 0,
\]
where $P = (1 - \ze) \Z_{K_z}$ if this ideal is prime, and $P$ is one of the two primes dividing $(1 - \ze) \Z_{K_z}$ otherwise.
The case $a = 0$ is not excluded in any case listed above; so assuming that $a \geq 1$, we use
the inverse Artin-Hasse logarithm and exponential maps, in exactly the same way as on
p.\ 177 of \cite{CoDiOl3}, to conclude that
\begin{equation}\label{eq:cdo_setup}
\frac{P^a}{P^{a + 1}} [T] \neq 0.
\end{equation}
Necessary conditions for \eqref{eq:cdo_setup} were given in Theorem 1.2 of the first author, Diaz y Diaz, and
Olivier's study \cite{CoDiOl1} of cyclotomic fields. In all cases $P$ and $K_z$ have the same meaning here and
in \cite{CoDiOl1}.
\begin{itemize}
\item
If $\ell \nmid D$, then let $K$ have the same meaning as here, and consider the $\tau - g$ eigenspace 
with $e(\p) = 1$. Then Theorem 1.2 implies
that $a \equiv 2 \pmod{\ell - 1}$.
\item
If $\ell \mid D$ and $D \equiv 1 \pmod 4$, let $K$ of \cite{CoDiOl1} be
$\Q(\sqrt{D \ell})$, so that the $T$-eigenspace lies within the $\tau - g^{(\ell + 1)/2}$ eigenspace. Then
Theorem 1.2 implies that
$a \equiv \frac{\ell + 3}{2} \pmod{\ell - 1}$.
\item
If $\ell \mid D$ and $D \equiv 3 \pmod 4$, then again $K$ has the same meaning in \cite{CoDiOl1} as here;
now $e(\p) = 2$, so that $a \equiv 2 \pmod{\frac{\ell - 1}{2}}$.
\end{itemize}
So, given that $a \leq \ell - 1$, we obtain respectively in these three cases for $\mff(E''/K_z)$ that
$a \in \{0, 2 \}$, $a \in \{0, \frac{\ell + 3}{2} \}$, and $a \in \{ 0, 2, \frac{\ell + 3}{2} \}$. 
By Proposition \ref{prop:iso_red} the corresponding values for $\mff(E''/K_z)$ are 
$a$, $a$, and $2 \lceil \frac{a}{2} \rceil$, so $a \in \{0, 2\}$, $a \in \{0, \frac{\ell + 3}{2} \}$, and
$a \in \{0, 2, \frac{\ell + 5}{2} \}$ respectively.
\end{proof}

\section{Proofs of the Main Results}\label{sec:proofs}

\begin{proof}[Proof of Theorem \ref{thm:main}]
Let $K = \Q(\sqrt{D})$ with $D< 0$. The key to the proof is the identity
$|G_{\mfb}| = |\Cl(K)/\Cl(K)^\ell|$
of Proposition \ref{prop:size_gb}.
By Lemma \ref{lem:dl_disc},
$\frac{1}{\ell-1}(|G_{\mfb}| - 1)$ equals 
the number of 
$D_{\ell}$ extensions with discriminant $(-1)^\frac{\ell-1}{2} D^{\frac{\ell-1}{2}}$. 
Simultaneously, Propositions \ref{prop:iso_red} and \ref{prop:fl_bij} imply that $\frac{1}{\ell-1}(|G_{\mfb}| - 1)$ is the number
of $F_{\ell}$ extensions whose Galois closure $E'$ contains the mirror field $K'$, with $\mff(E'/K') \mid \mfb \cap K$, and with 
$\tau \sigma \tau^{-1} = \sigma^g$ as described there. 
Theorem \ref{thm:char_mf} implies that the Galois closure of each $F_{\ell}$-field described in the theorem must contain $K'$,
so that it remains only to prove
that the condition $\mff(E'/K') \mid \mfb \cap K$ coincides with the discriminant conditions on the $F_{\ell}$-fields
counted in the theorem.

First assume that $\ell \equiv 1 \pmod 4$ or $\ell \nmid D$ (or both). Then
Lemma \ref{lem:tot_ram_mirror_a} implies that
$
\Disc(K') = \ell^{\ell-2} (-D)^\frac{\ell-1}{2}
$
or $\Disc(K')=\ell^{\ell-2}(-D/\ell)^{\frac{\ell-1}{2}}$, if $\ell\nmid D$ or $\ell\mid D$, respectively.
Thus,
we have
\begin{equation}
v_\ell(\Disc(E')) =  \ell(\ell-2) + (\ell-1) \mff_{\mfp}(E'/K')\;,
\end{equation}
where $\mfp$ is the unique (totally ramified) ideal of $K'$ above $\ell$. Writing $k = \mff_{\mfp}(E'/K')$, 
Propositions \ref{prop:iso_red}, \ref{prop:fl_bij}, and \ref{prop:cond_restrict} 
imply that
the fields counted are precisely those with $k \in \{0, 2 \}$ or $k \in \{0, \frac{\ell + 3}{2} \}$ for $\ell \nmid D$ and
$\ell \mid D$ respectively, 
so that the Brauer relation (\ref{brauerreln}) implies that $v_\ell(\Disc(E))\in\{\ell-2+k \}$ with $k$ as above.

If instead $\ell \equiv 3 \pmod 4$ and $\ell \mid D$,
then we have
$
\Disc(K') = \ell^{\ell-3} (-D/\ell)^\frac{\ell-1}{2}
$ and
\begin{equation}
v_\ell(\Disc(E')) \in \{ (\ell-3)\ell+(\ell-1)k:k \in \{0, 2, (\ell + 5)/2 \}\;,
\end{equation}
with $k \neq 2$
because the $\ell$-adic valuation of the discriminant of 
a degree $\ell$ 
field cannot be $\ell - 1$.

For each prime $q\neq\ell$ dividing $D$, $E'/K'$ is unramified at primes over $q$, so that by (\ref{brauerreln}) we have
$v_q(\Disc(E))=v_q(\Disc(K'))$. Also, $E$ must be totally real, because $K'$ is and $[E' : K']$ is odd.
Put together, in all cases
this shows that $\Disc(E)$ is equal to $D^{\frac{\ell-1}{2}}$ times a power of $\ell$ as prescribed in Theorem \ref{thm:main},
finishing the proof.
\end{proof}

\begin{proof}[Proof of Theorem \ref{thm:main_pos}]
The proof is essentially identical, now using the $D > 0$ case of Proposition \ref{prop:size_gb}, applying the identity
$\ell \cdot \frac{\ell^a - 1}{\ell - 1} + 1 = \frac{\ell^{a + 1} - 1}{\ell - 1}$, and obtaining the signature of $E$ by
\eqref{brauerreln}.
\end{proof}

\section{Numerical testing}\label{sec_numerics}
Our work began with $\ell = 5$, by inspecting the Jones-Roberts database of number fields \cite{JR} and finding patterns which called for explanation. 
However, for $\ell > 5$, this database
does not contain enough fields for a reasonable test, and does not include the Galois conditions featuring in our theorems.

We therefore wrote a program using {\tt Pari/GP} \cite{pari} to compute the relevant number fields, for which source code is 
available from the third author's website\footnote{\url{http://www.math.sc.edu/~thornef/}}. A few comments on this program:

Thanks to the relation $\Disc(E)=\Disc(F)\N(\f(E'/F))$ given after
(\ref{eqn:brauer_1}), to enumerate $F_{\ell}$ fields (possibly with
certain conditions, including discriminant and/or Galois restrictions), it
is enough to enumerate suitable $C_{\ell-1}$ fields $F$ (which is very easy),
and for each such field to enumerate suitable conductors $\f$ of 
$C_{\ell}$-extensions $E'/F$ such that $E'/\Q$ is Galois. Luckily, these
Galois conditions imply that these suitable conductors are very restricted,
since they must be of the shape $\f=n\a$, where $n$ is an ordinary integer
and $\a$ is an ideal of $F$ divisible only by prime ideals of $F$
which are above ramified primes of $\Q$, and in addition which must be
Galois stable.

For each conductor $\f$ of this form, we compute the
corresponding ray class group, and if it has cardinality divisible by $\ell$, 
we compute the corresponding abelian extension, and check which subfields of 
degree $\ell$ of that extension satisfy our conditions. 

Note that for our purposes, we only \emph{count} the $F_{\ell}$-extensions
that satisfy our conditions. Our program can also compute them explicitly
thanks to the key {\tt Pari/GP} program \url{rnfkummer}, for which the 
algorithm is described in detail in Chapter 5 of the first author's book 
\cite{Co}.

\smallskip

Our numerical testing was moderately extensive for $\ell = 5$ and $\ell = 7$, 
and rather limited for $\ell = 11$ and $\ell = 13$, as the complexity of our algorithms
grows rapidly with $\ell$. We verified our results and found $F_\ell$-fields with all the
powers of $\ell$ given in our main theorems, with the exception of $13^4$. The computational complexity of our
algorithm severely limited the amount of testing we could conduct with $\ell = 13$; we speculate that the power of $\ell$ not found
is uncommon but does exist.

\bibliographystyle{alpha}
\bibliography{nakagawa}

\end{document}